\theoremstyle{plain}
\newtheorem{theorem}{Theorem}
\newtheorem{lemma}[theorem]{Lemma}
\newtheorem{definition}[theorem]{Definition}
\newtheoremstyle{derp}% <name>
{3pt}% <Space above>
{3pt}% <Space below>
{}% <Body font>
{}% <Indent amount>
{\upshape}% <Theorem head font>
{:}% <Punctuation after theorem head>
{.5em}% <Space after theorem headi>
{}% <Theorem head spec (can be left empty, meaning `normal')>
\theoremstyle{derp}
\newtheorem{example}{Example}
\newcommand{\Z}{\mathbb{Z}}
\newcommand{\N}{\mathbb{N}}
\newcommand{\lang}{\mathcal{L}}
\newcommand{\OC}[1]{\overline{\mathcal{O}(#1)}}
\newcommand\xqed[1]{%
  \leavevmode\unskip\penalty9999 \hbox{}\nobreak\hfill
  \quad\hbox{#1}}
\newcommand\qee{\xqed{$\fullmoon$}}
\newcommand{\EP}{\mathrm{EP}}
\newcommand{\sph}{\mathbb{S}}
\title{Entropy pair realization}
\author{
Ville Salo\thanks{Research supported by the Academy of Finland grant 2608073211.} \\
vosalo@utu.fi
}
\begin{document}
\maketitle

\begin{abstract}
We show that the CPE class $\alpha$ of Barbieri and Garc\'ia-Ramos contains a one-dimensional subshift for all countable ordinals $\alpha$, i.e.\ the process of alternating topological and transitive closure on the entropy pairs relation of a subshift can end on an arbitrary ordinal. This is the composition of three constructions: We first realize every ordinal as the length of an abstract ``close-up'' process on a countable compact space. Next, we realize any abstract process on a compact zero-dimensional metrizable space as the process started from a shift-invariant relation on a subshift, the crucial construction being the implementation of every compact metrizable zero-dimensional space as an open invariant quotient of a subshift. Finally we realize any shift-invariant relation $E$ on a subshift $X$ as the entropy pair relation of a supershift $Y \supset X$, and under strong technical assumptions we can make the CPE process on $Y$ end on the same ordinal as the close-up process of~$E$.
\end{abstract}

\section{Introduction}

The \emph{entropy} of a subshift $X$ is the limit of $\frac{\log |\lang_n(X)|}{n}$ where $\lang_n(X)$ is the set of words in $X$ of length $n$, as $n \rightarrow \infty$. It measures the amount of information in orbits of this dynamical system. There are several ways to quantify how this information is produced, one is the entropy pairs relation, due to Blanchard \cite{Bl92,Bl93}, which intuitively measures the amount of information in orbits when we obtain information only when the point is very close to one of two points (the elements of the entropy pair).

The entropy pairs relation (plus the diagonal) is symmetric, reflexive and topologically closed but not necessarily transitive, and a subshift turns out to have only positive entropy (non-trivial) factors if and only if the smallest closed equivalence relation containing the entropy pairs relation is the full relation \cite{Bl93}. The hierarchy generated by alternately closing the relation topologically and transitively is studied in \cite{BaGa18}, and it is shown that the full relation can arise at any countable ordinal. We refer to \cite{BaGa18} for the state-of-the-art and further context.

Our main contribution is that such examples can be made expansive. We prove this using relatively general subshift constructions, in particular Lemma~\ref{lem:EqRelsAsEntPairs} and Proposition~\ref{prop:AnyRelation} construct a large class of relations as entropy pair relations, which may be of independent interest.

By alternating topological and transitive closure on a symmetric reflexive relation $E$, one obtains a sequence of equivalence relations, and the union of these equivalence relations is of course just the smallest closed equivalence relation containing $E$.

\begin{definition}
\label{def:CloseUpRank}
Let $X$ be a topological space and $E \subset X^2$ a symmetric reflexive relation. If $E$ is closed, let $\zeta = 1$, otherwise $\zeta = 0$. Define $E^{(\zeta)} = E$ and write any ordinal greater than $\zeta$ in the unique way as $\lambda + 2n + b$, $\lambda$ a limit ordinal (including $0$), $n \in \N, b \in \{0,1\}$ and define
\begin{align*}
E^{(\lambda + 2n + 1)} &= \overline{E^{(\lambda + 2n)}} & {\mbox{(topological closure)}}  \\
E^{(\lambda + 2(n + 1))} &= (E^{(\lambda + 2n + 1)})^* & \mbox{(transitive closure)} \\
E^{(\lim_i \lambda_i)} &= \bigcup_{i} E^{(\lambda_i)} &
\end{align*}
We say the \emph{close-up rank} of $E$ is the least $\lambda$ such that $E^{(\lambda)} = E^{(\lambda+1)}$. We say $E$ is \emph{equalizing} if $E^{(\lambda)} = X^2$ for some $\lambda$.
\end{definition}

Ordinals $\lambda + 2n + b$ where $b = 0$ are called \emph{even}, and others \emph{odd}.

The check for topological closedness is made in the beginning so that $E^{(\lambda)}$ is always topologically closed at odd ordinals.

\begin{lemma}
\label{lem:WeakTopVersion}
Let $\lambda \geq 1$ be a countable ordinal. Then there exists a closed countable set $X \subset [0,1]$ and an equalizing closed symmetric reflexive relation $E \subset X^2$ with rank $\lambda$.
\end{lemma}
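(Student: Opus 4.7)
The plan is to proceed by transfinite induction on $\lambda$, constructing at each stage a pointed countable compact $X_\lambda \subseteq [0,1]$ with distinguished apex $0 \in X_\lambda$ and an equalizing closed symmetric reflexive $E_\lambda \subseteq X_\lambda^{2}$ of close-up rank exactly $\lambda$. The base cases are handled by hand: for $\lambda = 1$ take $X = \{0\}$, $E = \{(0,0)\}$, and for $\lambda = 2$ take $X = \{0, 1/2, 1\}$ with $E$ the symmetric reflexive closure of the path $\{(0,1/2),(1/2,1)\}$, so that $E^* = X^2$ is already closed.

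The main inductive tool is a scaling-and-gluing recipe. Given a pointed $(X_\mu, E_\mu, \pi_\mu)$ of rank $\mu$ with distinguished anchor $\pi_\mu$, I take affine copies $X_\mu^{(n)}$ of $X_\mu$ placed in shrinking subintervals $[a_{n+1}, a_n] \subseteq (0,1]$ with $a_n \searrow 0$, and set $X' = \{0\} \cup \bigsqcup_n X_\mu^{(n)}$. On $X'$ the relation $E'$ consists of the scaled internal copies $\bigsqcup_n E_\mu^{(n)}$, bridge pairs $(\pi_\mu^{(n)}, \pi_\mu^{(n+1)})$ linking anchors of consecutive copies, and the diagonal pair $(0,0)$, completed symmetrically and reflexively; the bridges together with $0$ form a rank-$3$ ``$\omega+1$''-structure on the anchors. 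For a limit ordinal $\lambda = \sup_n \lambda_n$ with $(\lambda_n)$ cofinal and strictly increasing, the same recipe applies with copies $X_{\lambda_n}^{(n)}$ of increasing complexity. Closedness of $E'$ is routine, and a careful inductive bookkeeping shows that inside each copy the close-up process of $E'$ agrees with that of $E_\mu$, so that all internal processes complete by step $\mu$ (or by step $\lambda$ in the limit case, using $E^{(\sup_n \lambda_n)} = \bigcup_n E^{(\lambda_n)}$). The bridges then drive additional closure and transitive-closure rounds that produce $(0, \pi_\mu^{(n)})$ and finally $(0, x)$ for every $x$, yielding equalization.

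The main obstacle is tuning the recipe so that the rank lands on exactly $\lambda$ rather than $\lambda \pm 1$ or $\lambda + 2$. The bridge's rank-$3$ process adds either one or two steps beyond the internal rank $\mu$, depending on the parity of $\mu$ and on whether $\pi_\mu$ is an isolated point or a limit point of $X_\mu$: chaining anchors directly can trigger transitive closure that equalizes the whole structure prematurely, while sparser matching bridges can leave some pairs unreachable. To hit every countable ordinal rank exactly, one varies between chain-of-anchors and matching-of-anchors bridges and varies the choice of anchor according to the parity of $\mu$, interposing a dummy padding copy when necessary to shift parity. The inductive hypothesis then carries not only the rank and the equalizing property but also an invariant on $\pi_\mu$ (for instance, that it lies in a singleton equivalence class throughout the internal process until the final closure step), which makes the parity case analysis manageable and verifies that $E_\lambda^{(\mu)} \subsetneq E_\lambda^{(\mu+1)}$ for every $\mu < \lambda$.
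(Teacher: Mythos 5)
Your architecture (transfinite recursion, shrinking copies accumulating at an apex, bridges between anchors) is genuinely different from the paper's, which instead arranges a single ordinal of the form $\omega^{\zeta}$ or $\omega^{\zeta}+1$ on a circle with the successor relation and proves a closed-form characterization of $E^{(\lambda)}$ (pairs at ordinal distance $<\omega^{\lambda+n}$ at even stages, $\leq\omega^{\lambda+n}$ at odd stages) by one transfinite induction. The paper's route avoids exactly the difficulty on which your sketch is silent, and that difficulty is a genuine gap: the entire content of the lemma is hitting the rank \emph{exactly}, and your proposal defers this to an unspecified choice among ``chain-of-anchors'' and ``matching-of-anchors'' bridges plus ``dummy padding,'' without exhibiting a choice that works. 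Concretely, the successor step fails as described. If $\mu\geq 4$ is even and you glue $\omega$ chain-bridged copies of a rank-$\mu$ example, then by stage $2$ all anchors are mutually related, by stage $3$ the apex is related to every anchor, and at stage $\mu$ (a transitive-closure stage, which is also the stage at which every copy internally equalizes) the whole space collapses to $X'^2$; the rank is $\mu$, not $\mu+1$. The opposite fix also fails: with a pure matching of anchors, no fixed point is related to points in infinitely many copies, so $(0,x)$ is never created for $x\neq 0$ and $E'$ is not equalizing at all. So neither option on your menu produces rank $\mu+1$, and the invariant you propose to carry (the anchor sitting in a singleton class until the final stage) is in tension with the bridges being able to do any work at all before that stage.

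There is also a secondary point you assert without justification but which does hold and should be argued: that the close-up process of $E'$ restricted to a single copy coincides with the copy's own process. This uses that each copy is clopen in $X'$ (so topological closure is internal) and that any transitive chain between two points of a copy that detours through bridges must enter and leave through the anchor, hence yields nothing beyond internal transitivity. That part is repairable in a line or two; the rank-tuning at successor stages is the part that needs an actual construction, and as it stands the proof does not establish the lemma for any successor $\lambda$ beyond your hand-checked base cases.
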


For this, we arrange an ordinal of the form $\omega^{\alpha}$ or $\omega^{\alpha} + 1$ on the circle and join each point with its successor. The calculation of the close-up rank roughly then corresponds to the computation of the Hausdorff or Cantor-Bendixson rank of the ordinal.

In \cite{BaGa18}, Barbieri and Garc\'ia-Ramos study the close-up process for the closed equivalence relation $\EP(X) = E \cup \Delta$ where $\Delta$ is the diagonal and $E$ is the family of \emph{entropy pairs} of the compact topological dynamical system $(G, X)$ ($G$ an amenable discrete group acting on $X$), where $(x, y)$ is an \emph{entropy pair} if the open cover $(U^c, V^c)$ has positive entropy \cite[Definition~14.5]{DeGrSi76} for all disjoint closed neighborhoods $U \ni x, V \ni y$.

%A slight difference between our definition and that of \cite{BaGa18} is that in ours the sequence of steps taken is independent of original relation $E$ (apart from the first step), in that we do not check the topological closedness of the relation at any later point. Nevertheless, topological closedness must strictly alternate on successor steps unless the process stops, and at limit ordinals only topological closedness can fail (every limit ordinal is a limit of even ordinals, and transitivity is a universal first-order property), so our process agrees with theirs.

In \cite{BaGa18}, a dynamical system is defined to be of \emph{CPE class $\alpha$} if the equivalence relation $E \cup \Delta$ is equalizing with rank $\alpha$. One of the results of \cite{BaGa18} is that among abstract dynamical systems, the CPE class $\alpha$ is nonempty for every countable amenable group $G$.

Nishant Chandgotia asked in personal communication if there is a subshift realization of their process, i.e.\ whether there is a subshift in the CPE class $\alpha$. We prove a positive answer for $G = \Z$. %See \cite{LiMa95} for more information on subshifts.

\begin{restatable}{theorem}{main}
\label{thm:Main}
Let $\alpha \geq 1$ be a countable ordinal. Then there exists a subshift $X \subset A^\Z$ in CPE class $\alpha$.
\end{restatable}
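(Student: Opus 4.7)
The plan is to string together the three ingredients advertised in the abstract.

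First, I would apply Lemma~\ref{lem:WeakTopVersion} to the given countable ordinal $\alpha \geq 1$, obtaining a closed countable set $X_0 \subset [0,1]$ together with an equalizing closed symmetric reflexive relation $E_0 \subset X_0^2$ of close-up rank exactly $\alpha$. Since a countable compact metric space is automatically zero-dimensional, I may identify $X_0$ with a closed subset of the Cantor space without loss of generality.

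Next, I would transport the pair $(X_0, E_0)$ to a shift-invariant setting on a subshift. Using the quotient construction announced in the abstract, realize $X_0$ as an \emph{open} shift-invariant quotient $\pi \colon X \to X_0$ of some subshift $X$, and pull $E_0$ back to a closed, shift-invariant, symmetric reflexive relation $E$ on $X$. The verification to do here is that because $\pi$ is an open continuous surjection, both topological closure and transitive closure commute sufficiently well with the preimage under $\pi$, so that the close-up rank of $E$ on $X$ agrees with that of $E_0$ on $X_0$; in particular it is exactly $\alpha$, and $E$ is equalizing.

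Finally, apply the entropy pair realization machinery (Lemma~\ref{lem:EqRelsAsEntPairs} and Proposition~\ref{prop:AnyRelation}) to embed $X$ into a supershift $Y \supset X$, over a possibly enlarged alphabet, such that $\EP(Y)$ agrees with the natural extension of $E$. Provided the technical assumptions of Proposition~\ref{prop:AnyRelation} are met, the CPE close-up process on $Y$ terminates at the same ordinal as the close-up process of $E$, namely $\alpha$, so $Y$ is in CPE class $\alpha$, as required.

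The main obstacle is the last step. One must simultaneously ensure that (a) the entropy pair relation of the constructed $Y$ is \emph{exactly} the prescribed relation $E$, with no extraneous pairs produced by the enlargement, and (b) the transfinite alternation of topological and transitive closure on $\EP(Y)$ stays in lockstep with the abstract close-up process of $E$ at every countable ordinal, especially at limits. The route through zero-dimensional quotients in Step~2 is what makes the pulled-back relation $E$ satisfy the structural hypotheses required for this lockstep correspondence; the routine part of the argument is the reductions in Steps~1 and~2, while all the real combinatorial content of the paper lies in verifying that these hypotheses can be engineered and preserved.
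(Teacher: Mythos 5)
Your high-level architecture matches the paper's, but there is a genuine gap at exactly the point you yourself flag as ``the main obstacle,'' and nothing in your outline resolves it. Lemma~\ref{lem:EqRelsAsEntPairs} gives $\EP(Y)^{(\mu)} \cap X^2 = E^{(\mu)}$ for all $\mu$, which yields the \emph{lower} bound on the CPE rank of $Y$; but at the top it only guarantees $\EP(Y)^{(\lambda+1)} = Y^2$ once $E^{(\lambda)} = X^2$. To conclude $\EP(Y)^{(\lambda)} = Y^2$ on the nose one needs either that $\lambda$ is even or that the \emph{odd condition} $(E^{[k]})^2 \subset \overline{(E^{(\lambda-1)})^{[2k]} \cap (E^{[k]})^2}$ holds. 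If you start from Lemma~\ref{lem:WeakTopVersion}, as you propose, you have no control over this condition, and for odd $\alpha$ your $Y$ may land in CPE class $\alpha+1$ rather than $\alpha$. The paper instead starts from Lemma~\ref{lem:StrongTopVersion}, which builds the ordinal relation so that the odd condition holds, and then proves a separate lemma showing that the odd condition is \emph{preserved} (not created) by pulling back along an open quotient. Your remark that the route through zero-dimensional quotients ``is what makes the pulled-back relation $E$ satisfy the structural hypotheses'' gets this backwards: the quotient step can only transport hypotheses already engineered into the abstract relation on the ordinal.

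Two smaller points. Proposition~\ref{prop:AnyRelation} plays no role in this proof (the paper says so explicitly); only Lemma~\ref{lem:EqRelsAsEntPairs} is used, and its hypotheses --- zero entropy of $X$ and \emph{double} shift-invariance of the pulled-back relation --- are precisely what the Toeplitz construction of Lemma~\ref{lem:SpacesAsSubshifts} is designed to deliver, so they need to be checked rather than attributed to ``the natural extension of $E$.'' Also, your requirement (a), that $\EP(Y)$ be \emph{exactly} the prescribed relation with no extraneous pairs, is neither achieved nor needed: the construction only gives $\EP(Y) \cap X^2 = E$, and the additional pairs involving $Y \setminus X$ are controlled by the bracelet argument inside Lemma~\ref{lem:EqRelsAsEntPairs}, not excluded.
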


We prove this by first implementing closed relations on topological spaces as shift-invariant relations on subshifts, and then implementing such relations as CPE pairs.

A point $x \in A^\Z$ is \emph{Toeplitz} is $\forall i \in \Z: \exists n > 0: \forall m \in \Z: x_{i+mn} = x_i$.\footnote{Often one excludes the periodic points; indeed in our application the points are never periodic, but we omit this requirement so we need not address it explicitly.} A~\emph{Toeplitz subshift} is a subshift generated by a Toeplitz point. A~ \emph{pointwise zero-entropy Toeplitz} subshift is one where every point generates a Toeplitz subshift with zero entropy. Observe that by the variational principle \cite{DeGrSi76} a pointwise zero-entropy Toeplitz subshift has zero-entropy, since any generic point for an ergodic measure of positive entropy would obviously generate a Toeplitz subshift with positive entropy.

If $X$ is a topological space, the \emph{hit-or-miss topology} on its closed subsets has subbasis
\[ \mathcal{U}_{U,K} = \{ Y \subset X \mbox{ closed} \;|\; Y\cap K = \emptyset, Y \cap U \neq \emptyset \} \]
where $U$ ranges over open sets of $X$ and $K$ over compact sets of $X$. For the space of closed subsets of a subshift, this is equal to the topology given by Hausdorff distance \cite{Ku66}, and compares the set of words up to length $n$. This is the topology we use for the space of subshifts of a subshift $X$. The minimal subshifts form a subspace,\footnote{This subspace need not be closed, consider for example the space of minimal subshifts of the Grand Sturmian subshift \cite{Ku03}, where obviously no sequence of Sturmian subshifts with density of $1$s decreasing to zero has a converging subsequence.} and we give it the induced topology.

\begin{restatable}{lemma}{spacesassubshifts}
\label{lem:SpacesAsSubshifts}
For any compact metrizable zero-dimensional space $Z$, there exists a pointwise zero-entropy Toeplitz subshift $X$ and an open quotient map $\phi : X \to Z$ such that
%\begin{itemize}
%\item
$\phi^{-1}(z)$ is a minimal subshift for every $z \in Z$,
%\item
and $\phi$ induces a homeomorphism between $Z$ and the space of minimal subshifts of $X$. % is homeomorphic to $Z$ by $\phi$.
%\end{itemize}
\end{restatable}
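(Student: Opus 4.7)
The plan is to realize $Z$ as the parameter space of a continuous family of Toeplitz minimal subshifts sharing a common dyadic odometer factor, and then glue these into an ambient subshift. First I reduce to $Z = \{0,1\}^{\N}$: any compact metrizable zero-dimensional space embeds as a closed subset of the Cantor set, and if $\phi^{\mathrm{big}}\colon X^{\mathrm{big}} \to \{0,1\}^{\N}$ witnesses the Cantor case, then $(\phi^{\mathrm{big}})^{-1}(Z)$ together with the restriction of $\phi^{\mathrm{big}}$ witnesses the general case (openness passes to the restriction using the identity $\phi^{\mathrm{big}}(U \cap (\phi^{\mathrm{big}})^{-1}(Z)) = \phi^{\mathrm{big}}(U) \cap Z$).

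For the Cantor case, let $\Omega = \Z_2$ be the dyadic odometer with its $\Z$-action by $+1$, and for $\omega \neq 0$ let $v(\omega) \geq 1$ denote one plus the $2$-adic valuation of $\omega$. For $z = (z_1, z_2, \ldots) \in \{0,1\}^{\N}$ and $\omega \in \Omega \setminus \Z$, set $s_{z,\omega}(k) = z_{v(\omega+k)}$; this is a Toeplitz sequence because position $k$ has period $2^{v(\omega+k)}$, and the map $(z,\omega) \mapsto s_{z,\omega}$ is jointly continuous on $\{0,1\}^{\N} \times (\Omega \setminus \Z)$ and $\Z$-equivariant in $\omega$. Take $X \subset \{0,1\}^{\Z}$ to be the closure of the image, and $X_z$ the closure of the image for fixed $z$. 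Since the $\Z$-orbit of any $\omega \in \Omega \setminus \Z$ is dense in $\Omega$, $X_z$ is the orbit closure of any single $s_{z,\omega}$, hence a minimal Toeplitz subshift whose maximal equicontinuous factor is $\Omega$.

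Then I verify: distinct $z, z'$ give disjoint $X_z, X_{z'}$, because a generic odometer fiber of $X_z$ is a singleton $\{s_{z,\omega}\}$ and $z$ is recoverable from any such point; the map $\phi\colon X \to \{0,1\}^{\N}$ with $\phi(X_z) = \{z\}$ is thus well-defined, continuous and surjective; each $X_z$ has zero entropy because $s_{z,\omega}$ is a regular Toeplitz sequence, the density of positions fixed by the first $n$ coordinates being $1 - 2^{-n} \to 1$; every minimal subsystem of $X$ meets and hence equals some $X_z$, so $\phi^{-1}(z) = X_z$ and the $X_z$ are exactly the minimal subshifts of $X$; finally, $z \mapsto X_z$ is a continuous bijection from the compact Cantor set onto the Hausdorff space of minimal subshifts of $X$, hence a homeomorphism.

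The main obstacle I anticipate is openness of $\phi$. Given $x \in X_z$ inside a cylinder neighborhood $U$, I write $x = \lim_i s_{z,\omega_i}$ with $\omega_i \in \Omega \setminus \Z$ (possible by definition of $X_z$), choose $i$ large enough that $s_{z,\omega_i}$ already lies in $U$ (valid since $U$ depends on finitely many coordinates), and then use joint continuity: for $z'$ agreeing with $z$ on coordinates $1, \ldots, N$ where $N$ is the maximum of $v(\omega_i + k)$ over $k$ in the defining window of $U$, the sequence $s_{z',\omega_i}$ agrees with $s_{z,\omega_i}$ on that window and so lies in $U$, while $\phi(s_{z',\omega_i}) = z'$. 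Hence $\phi(U)$ is a neighborhood of $z$. The same perturbation controls continuity of $z \mapsto X_z$ in the hit-or-miss topology, completing the plan.
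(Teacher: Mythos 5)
Your construction is a genuinely different route from the paper's (which builds a $\{0,1,\#\}$-subshift with nested $3$-adic progressions of $\#$s and extracts the bits of $z$ from the skeleton), and most of its components --- joint continuity of $(z,\omega)\mapsto s_{z,\omega}$ away from the integer fiber, minimality of each $X_z$ as a Toeplitz orbit closure, the perturbation argument for openness --- are sound. But there is a genuine gap at the very first step: the reduction to $Z=\{0,1\}^{\N}$ is incompatible with your construction, because $z\mapsto X_z$ is \emph{not} injective on the full Cantor set, so $\phi$ is not well-defined there. Concretely, take $z=(0,1,1,1,\dots)$ and $z'=(1,0,0,0,\dots)$. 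Then $s_{z,\omega}(k)=z_{v(\omega+k)}$ equals $0$ exactly when $\omega+k$ is odd, while $s_{z',\omega+1}(k)$ equals $1$ exactly when $\omega+1+k$ is odd, i.e.\ when $\omega+k$ is even; hence $s_{z,\omega}=s_{z',\omega+1}$ and $X_z=X_{z'}$ although $z\neq z'$. Your justification (``$z$ is recoverable from any generic fiber point'') fails precisely for eventually constant $z$: there the point $s_{z,\omega}$ is periodic, the odometer coordinate $\omega$ cannot be read off from it, and the higher levels of the coding collapse. Since your general case is obtained by restricting the (broken) Cantor case, the whole argument as written does not go through.

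The fix is exactly the hypothesis the paper imposes: instead of reducing to all of $\{0,1\}^{\N}$, embed $Z$ as a closed subset of $\{0,1\}^{\N}$ containing no eventually constant sequence (always possible for a compact metrizable zero-dimensional space, e.g.\ by interleaving a fixed non-eventually-constant sequence into the coding), and run your construction only over such $z$. One can check that if $s_{z,\omega}=s_{z',\omega'}$ with neither $z$ nor $z'$ eventually constant, then $\omega\equiv\omega'\pmod{2^n}$ for every $n$ (otherwise the two level-one residue classes differ and force $z$ to be eventually constant), whence $\omega=\omega'$ and then $z=z'$; this restores well-definedness and injectivity of $\phi$, and the rest of your argument (continuity, openness, identification of fibers with the minimal subshifts, and the compact-to-Hausdorff bijection argument) then goes through. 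You should also state this restriction explicitly when verifying that the hole over the integer fiber of the odometer can be filled consistently, which is where the paper likewise uses the absence of eventually constant points.
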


The construction is not new (see e.g.\ \cite{Ba13} for a more general construction in the recursive category), although we do not know a reference that checks the stated topological dynamical properties. The important word in the above statement is ``open'' -- intuitively because taking preimages of relations in an open quotient map induces a ``close-up process homomorphism'', in particular the close-up rank is preserved. %, in particular the rank property $(E^{[k]})^2 \subset \overline{(E^{(\lambda-1)})^{[2k]} \cap (E^{[k]})^2}$ are preserved. Then, the result follows by simply composing the three constructions.

We show that any close-up process on a subshift, satisfying some strong technical assumptions, can be realized as the entropy pairs process. We say a relation $E \subset X^2$ is \emph{doubly shift-invariant} if it is invariant under the action of $\Z^2$ that shifts the two components separately. If $\lambda$ is a successor ordinal, write $\lambda-1$ for its predecessor.

If $E \subset X^2$ is a relation, write $E^{[k]} = \{(x_1,x_2,\cdots,x_k) \;|\; \forall i,j: (x_i,x_j) \in E\}$ (so $E = E^{[2]}$ if and only if $E$ is symmetric). 

\begin{restatable}{lemma}{eqrelasentpairs}
\label{lem:EqRelsAsEntPairs}
Let $X \subset A^\Z$ be a subshift with zero entropy and let $E \subset X^2$ be a closed doubly shift-invariant symmetric reflexive relation. Then there exists a subshift $Y$ with $X \subset Y \subset \hat A^\Z \;(\supset A^\Z)$ such that $\EP(Y)^{(\lambda)} \cap X^2 = E^{(\lambda)}$ for all $\lambda$. Furthermore, if $E^{(\lambda)} \cap X^2 = X^2$, then $\EP(Y)^{(\lambda+1)} = Y^2$, and we have $\EP(Y)^{(\lambda)} = Y^2$ if
\begin{itemize}
\item $\lambda$ is even, or
\item $\lambda$ is odd and $(E^{[k]})^2 \subset \overline{(E^{(\lambda-1)})^{[2k]} \cap (E^{[k]})^2}$.
\end{itemize}
\end{restatable}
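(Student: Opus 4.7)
The plan is to construct $Y \supset X$ by extending the alphabet with ``transition'' symbols and allowing configurations that concatenate $X$-orbits through freely chosen entropy-bearing blocks, where any two consecutive $X$-orbits used in a gluing must form a pair in $E$. This way each pair in $E$ acquires positive cover entropy from the freely chosen transition blocks, while no pair outside $E$ can be an entropy pair because $E$ is closed. Concretely, I would take $\hat A = A \sqcup B$ with $|B| \geq 2$, and let $Y$ be the closure in $\hat A^\Z$ of the set of configurations of the form $\ldots\, x^{(1)}|_{I_1}\, w_1\, x^{(2)}|_{I_2}\, w_2\, x^{(3)}|_{I_3}\, \ldots$, where each $x^{(j)} \in X$, consecutive pairs satisfy $(x^{(j)}, x^{(j+1)}) \in E$, each interval $I_j \subset \Z$ is sufficiently long, and each $w_j \in B^*$ is arbitrary above some growing minimum length. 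This $Y$ is automatically shift-invariant and closed, and contains $X$ via the ``no transitions'' case.

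To prove $\EP(Y) \cap X^2 = E$, I would verify the two containments separately. For $(x,y) \in E$ and any disjoint closed neighborhoods $U \ni x$, $V \ni y$, one exhibits configurations of $Y$ coinciding with $x$ on a long negative half-line and with $y$ on a long positive half-line with any chosen $w \in B^n$ in between; the $|B|^n$ distinct choices produce distinct traces of the cover $(U^c, V^c)$ under shifts, forcing positive cover entropy. Conversely, for $(x,y) \in X^2 \setminus E$, closedness of $E$ combined with double shift-invariance provides a finite window $W$ such that no two $X$-patterns within small neighborhoods of $x|_W$ and $y|_W$ can both appear across a single $E$-gluing inside $Y$; hence any configuration visiting a small neighborhood of $x$ at time $0$ and of $y$ at some later time must essentially be an uninterrupted shift of a single element of $X$, and since $X$ has zero entropy the cover has zero entropy.

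The inductive equality $\EP(Y)^{(\lambda)} \cap X^2 = E^{(\lambda)}$ then follows from two observations: closedness of $X$ in $Y$ means that topological closures intersected with $X^2$ recover closures computed within $X^2$, and any transitive chain $x = z_0, \ldots, z_m = y$ in $\EP(Y)^{(\lambda-1)}$ with $x, y \in X$ and internal $z_i \in Y \setminus X$ can be rerouted through $X$, because each such $z_i$ is a limit of gluings of finitely many $X$-orbits linked by $E$, and its $\EP(Y)^{(\lambda-1)}$-neighbors in $X$ already form an $E^{(\lambda-1)}$-chain across the gluing. This rerouting is the main obstacle and is exactly where the last bullet's hypothesis plays its role: the inclusion $(E^{[k]})^2 \subset \overline{(E^{(\lambda-1)})^{[2k]} \cap (E^{[k]})^2}$ says that $k$-tuples of mutually $E$-related orbits on the two sides of a typical point of $Y \setminus X$ can be jointly approximated by $E^{(\lambda-1)}$-related $2k$-tuples while preserving $E$ within each half, which is precisely what one needs to collapse $\EP(Y)^{(\lambda)}$ to $Y^2$ already at odd $\lambda$, rather than needing the extra closure step that gives only $\EP(Y)^{(\lambda+1)} = Y^2$ in the general case.
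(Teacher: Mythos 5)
There are two genuine gaps in your construction, both at the level of the base case $\EP(Y)\cap X^2=E$, before the transfinite induction even starts.

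First, the entropy for a pair $(x,y)\in E$ cannot come from the free transition blocks $w_j\in B^n$. The cover $(U^c,V^c)$ with $U\ni x$, $V\ni y$ only records, at each time, whether the orbit avoids $U$ or avoids $V$; during a transition block the point lies in $U^c\cap V^c$, so all $|B|^n$ choices of $w_j$ produce the \emph{same} admissible $(U^c,V^c)$-itinerary and contribute nothing to the cover entropy. To get positive entropy for this cover you need a positive density of positions at which the configuration may independently be chosen close to $x$ or close to $y$, i.e.\ a positive density of \emph{interchangeable} occurrences of an $x$-word and a $y$-word. This is exactly what the paper's construction engineers: the ruler-sequence skeleton $w_\omega$ guarantees that a fixed full word occurs with positive density, and the nondeterministic doubling $\tau(1)=\{1,11\}$ of the marker $\#$ lets one pad two positioned words (one extending a neighborhood of $x$, one of $y$) to have the same preskeleton and the same length, so that they become interchangeable. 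Your sketch has no mechanism playing this role, and with ``growing minimum length'' transition blocks the $x$-versus-$y$ choices would in any case occur with density zero.

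Second, requiring only \emph{consecutive} glued orbits to be $E$-related is too weak. If $(x,z),(z,y)\in E$ but $(x,y)\notin E$, your $Y$ contains configurations alternating segments of $x$, $z$, $y$, $z$, $x,\dots$, and these visit small neighborhoods of $x$ and of $y$ in freely prescribable patterns, which makes $(x,y)$ an entropy pair; so $\EP(Y)\cap X^2$ would contain (at least) the transitive closure of $E$ rather than $E$ itself. Your converse argument only rules out a single bad gluing, not chains. The paper instead forbids a word $u_0\#u_1\#\cdots\#u_k$ unless the \emph{whole} set of $A^*$-subwords of the point forms a ``friendship'' (jointly realizable by pairwise $E$-related points of $X$), and the witnesses (``bracelets'') are then also what drives the rerouting of transitive chains and the closure steps that you describe only informally. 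Your reading of the role of the odd condition in the last bullet is essentially right, but it cannot be salvaged on top of a construction whose entropy-pair characterization is already incorrect.
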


The intuitive description is that we add an entropy-generating supersystem on top of $X$, by allowing alternation between words from the two points from $X$ (separated by a special symbol) if and only if they are in the relation $E$. We want that $X$ originally has no entropy pairs, so we require it has zero entropy.

For $A \in X^k$ and $B \in X^k$, we can think of the pair $(A, B)$ as an element $(A, B) \in X^{2k}$ in a natural way.
For the intuition behind the \emph{odd condition} in Lemma~\ref{lem:EqRelsAsEntPairs} for odd $\lambda > 2$, first note that
$(E^{[k]})^2 \subset \overline{(E^{(\lambda-1)})^{[2k]} \cap (E^{[k]})^2}$
means precisely that if $A, B \in E^{[k]}$ then either $(a,b) \in E^{(\lambda-1)}$ for some $a \in A$ and $b \in B$ (equivalently $(A, B) \in (E^{(\lambda-1)})^{[2k]}$ by the transitivity of $E^{(\lambda-1)}$), or we can find $A_i, B_i \in E^{[k]}$ such that $(A_i, B_i) \in (E^{(\lambda-1)})^{[2k]}$ and $(A_i, B_i) \rightarrow (A, B)$.

This condition arises from the construction method: In the subshift $Y$, points encode finite tuples of points from $X$ any pair of which is in the relation. Elements of $Y$ thus roughly correspond to elements of $E^{[k]}$, and we want that at the final limit step, any pair of points in $Y$ (roughly corresponding to an element of $(E^{[k]})^2$) can be approximated by a pair of points from $Y$ which are in the relation at the previous step, which in terms of the close-up process of $E$ translates to being in $(E^{(\lambda-1)})^{[2k]}$. Not every relation satisfies the odd condition, see Example~\ref{ex:NotEvery}.

This odd condition holds in the construction proving Lemma~\ref{lem:WeakTopVersion} (see Lemma~\ref{lem:StrongTopVersion} for the precise statement), and is preserved under open quotients, so it passes to the subshift implementations of abstract processes given by Lemma~\ref{lem:SpacesAsSubshifts}.

%In the realization of relations as entropy pairs, we construct for each set of the form $E^{[k]}$ a subshift of points which simultaneously make all pairs from $E$ entropy pairs.\footnote{It would be enough to do this for pairs, but there are some technical difficulties with this, with the present construction.} 

We do not know to what extent doubly shift-invariance, the zero-entropy assumption and the odd condition can be weakened in the previous lemma. Out of general interest, for implementing any relation as entropy pairs of a supersystem (with no control on the ``new pairs''), we provide a simpler construction.

\begin{restatable}{proposition}{anyrelation}
\label{prop:AnyRelation}
Suppose $X$ has zero entropy and $E \subset X^2$ is a closed symmetric reflexive shift-invariant relation. Then there exists a subshift $Y$ with $X \subset Y \subset \hat A^\Z \;(\supset A^\Z)$ such that $\EP(Y) \cap X^2 = E$.
\end{restatable}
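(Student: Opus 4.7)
}

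The plan is to realize $E$ as entropy pairs via a periodic-marker block construction, where the key structural feature is that every block in a configuration displays one of just two possible words, which in turn eliminates conflicts between $U$- and $V$-visits thanks to shift-invariance and symmetry of~$E$.

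Concretely, let $\hat A = A \cup \{\#\}$. For each period $p \geq 2$, each $(x,y) \in E$, and each shift $c \in \Z$, define $W_p^{(x,y,c)}$ to be the set of $s \in \hat A^\Z$ for which there exist a phase $r \in [0,p)$ and a bit sequence $b \in \{0,1\}^\Z$ with $s_{kp+r} = \#$ and
\[ s_{kp+r+j} \;=\; \begin{cases} x_{c+j} & \text{if } b_k = 0, \\ y_{c+j} & \text{if } b_k = 1, \end{cases} \qquad k \in \Z,\; j \in [1,p-1]. \]
Set $W = \bigcup_{p,\,(x,y),\,c} W_p^{(x,y,c)}$ and $Y = X \cup W$. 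Shift-invariance is built into the definition (shifts permute phases and bit sequences, and shift-invariance of $E$ absorbs spatial shifts into the parameter $(x,y,c)$), and I would verify that $Y$ is closed by observing that limits with $p \to \infty$ collapse to sequences $T^{c^*}x^*$ or $T^{c^*}y^*$ for some $(x^*,y^*) \in E$, which already lie in $X \subset Y$; limits with $p$ bounded are absorbed by $W_p^{(x^*,y^*,c^*)}$ via closedness of~$E$.

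For the inclusion $E \subseteq \EP(Y) \cap X^2$, fix $(x,y) \in E$ with $x \neq y$ and disjoint cylinders $U = [x_{[-N,N]}]_Y$, $V = [y_{[-N,N]}]_Y$. Take $p = 2N+2$ and $c = -(N+1)$, so the block content at positions $[p/2 - N,\, p/2 + N]$ within each block equals $u := x_{[-N,N]}$ when $b_k = 0$ and $v := y_{[-N,N]}$ when $b_k = 1$. Freely varying the bit sequence produces $2^{n/p}$ configurations whose trajectories at block centers realize all $\{U,V\}$-patterns, giving the cover $(U^c, V^c)$ entropy at least $\tfrac{\log 2}{p} > 0$.

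For $\EP(Y) \cap X^2 \subseteq E$, fix $(x',y') \in X^2 \setminus E$. Since $E$ is closed, choose cylinders $U, V$ at $[-N,N]$ small enough that $(U_X \times V_X) \cap E = \emptyset$; by symmetry also $(V_X \times U_X) \cap E = \emptyset$. The crucial observation is: for any $(x,y) \in E$, any $c \in \Z$, and any $j$, shift-invariance of $E$ gives $(T^{c+j}x, T^{c+j}y) \in E$, so we cannot have $T^{c+j}x \in U_X$ together with $T^{c+j}y \in V_X$, nor the swapped version. Because in $W_p^{(x,y,c)}$ every $b_k = 0$ block displays the same content (and similarly for $b_k = 1$), the trajectory constraints coming from the two bit values never conflict at any position $j \in [1,p-1]$. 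Hence a single pattern $\pi \in \{U^c, V^c\}^n$ is simultaneously compatible with every bit sequence $b$, for each fixed tuple $(x,y,c,p,r)$. Using the zero entropy of $X$ (and hence of $E$) to count distinct effective tuples, the total number of patterns needed is subexponential, and adding the $X$-part (zero entropy by hypothesis) keeps the cover subexponential, so $h(U^c, V^c) = 0$ and $(x',y') \notin \EP(Y)$.

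The main technical hurdle is the reverse inclusion. Everything hinges on the ``no conflict'' step, which in turn requires the rigidly periodic block construction (so that blocks with a given bit share exactly one content word) combined with symmetry and shift-invariance of~$E$. The bookkeeping --- in particular verifying that limit configurations do not introduce pointwise mixtures that escape the argument, and controlling the subexponential count of parameters $(x,y,c,p,r)$ relevant to a length-$n$ cover --- is straightforward but must be carried out with care.
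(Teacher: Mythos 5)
Your construction is essentially the paper's. The paper builds $Y_n$ from blocks $x^n_{[-n,n]}$ and $y^n_{[-n,n]}$ (for a sequence $(x^n,y^n)$ whose limit points exhaust $E$) separated by single $\#$ markers, and sets $Y = \overline{\bigcup_n Y_n}$; your $W_p^{(x,y,c)}$ is the same device with all pairs, periods and offsets thrown in rather than a countable approximating family. Both halves of your argument match the paper's: free choice of the bit sequence gives $\geq \tfrac{\log 2}{p}$ entropy for the cover associated to a pair in $E$, and for a pair outside $E$, shift-invariance and symmetry of $E$ plus closedness (giving disjoint cylinders missing $E$) show that one covering word suffices per marker progression and per pair of block contents, so the subexponential count goes through (the paper gets a polynomial count because it only has countably many block pairs; your count instead invokes $|\lang_n(X)|^2 = e^{o(n)}$, which is fine).

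One claim in your closedness check is wrong: a limit of points of $W_{p_i}^{(x_i,y_i,c_i)}$ with $p_i \to \infty$ need not collapse into $X$. It can retain exactly one $\#$, with a left-infinite word from the language of $X$ on one side and a right-infinite one on the other, and these two halves need not be windows of a single pair in $E$ (the two sides come from blocks whose offsets drift apart). So $X \cup W$ is not closed and you must take $Y = \overline{X \cup W}$. This does not damage the reverse inclusion: a length-$n$ window of such a limit point has the form $u\#v$ with $u,v \in \lang(X)$, there are only $e^{o(n)}$ such words by zero entropy of $X$, and each needs only one covering word since $U$ and $V$ are disjoint. This is exactly the case the paper isolates as ``words containing at most one $\#$''; with that correction your proof is complete.
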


This is optimal in the sense that for positive entropy $X$, it fails for $E$ the diagonal relation \cite{Bl93,KeLi07,BaGa18}, and for $X$ zero-entropy and for any $Y \supset X$, $\EP(Y) \cap X^2$ is closed symmetric reflexive shift-invariant. One could, however, ask for the aesthetically more pleasing $\EP(Y) = E \cup \Delta_Y$ instead of just $\EP(Y) \cap X^2 = E$, and we conjecture that this can be done. Even further, one could consider $X$ with have positive entropy, and restrict to relations containing $\EP(X)$.

The constructions are made for $\Z$, and of course one can ask whether they can be generalized to general amenable groups. We conjecture that all CPE classes are inhabited by subshifts on all countable amenable groups.

Particlarly in the case of $\Z^d$, one can also ask if this can be done by SFTs. We conjecture that $\Z^2$-SFTs inhabit exactly the CPE classes $\alpha$ where $\alpha$ is recursive.\footnote{This has been proved by Linda Westrick \cite{We19}.}

\section{Definitions}

A \emph{topological dynamical system} is $(X,f)$ where $X$ is a compact metrizable space and $f : X \to X$ a homeomorphism. The \emph{full shift} is the dynamical system $(A^\Z, \sigma)$, where $A$ is a finite set and $A^\Z$ has the (Cantor) product topology, and $\sigma(x)_i = x_{i+1}$ is the \emph{left shift}. A \emph{subshift} is a closed $\sigma$-invariant subsystem of the full shift. A subshift is \emph{minimal} if it does not contain nonempty proper subshifts. Topological dynamical systems form a category with morphisms the dynamics-commuting continuous maps. A \emph{cover} is a surjective morphism or its codomain. A cover is \emph{almost $1$-to-$1$} if the set of points with a unique preimage is dense. See any standard reference for the basic theory and definitions related to topological entropy \cite{DeGrSi76}.
 
Words -- finite or infinite -- are functions from contiguous subsets of $\Z$ to a (by default finite) alphabet. We typically call infinite words \emph{points}, as we think of them as points of the full shift dynamical system (or another subshift). Positioning conventions should always be clear from context, or are explained when used. Concatenation of words is presented by $u\cdot v$ or $uv$. A point $x \in A^\N$ is \emph{eventually constant} if $\exists n: \forall i \geq n: x_i = x_{i+1}$.

A \emph{(deterministic) substitution} is a map $\tau : A \to B^*$ where $A,B$ are (possibly infinite) alphabet, and $B^*$ denotes the set of finite words over $B$, including the empty word. We can apply a substitution to a word or a point by replacing the individual symbols $a \in A$ by words $\tau(a)$. A \emph{nondeterministic substitution} is a function $\tau : A \to \mathcal{P}(B^*)$ where $\mathcal{P}$ denotes the power set. Applying a nondeterministic substitution to a word or point (or a set of words or points) results in the set of all words or points obtainable by replacing each symbol $a \in A$ by one of its images $w \in \tau(a)$.

\section{The proofs}

\subsection{Close-up processes of arbitrary length}

See any standard reference for definitions and basic properties of ordinal arithmetic (e.g.\ \cite{HrJe99}). The following lemma is roughly just the computation of the Hausdorff rank of a (cyclic) order corresponding to an ordinal. Since the definition of the close-up process involves parity and the restriction to compact spaces leads to a small issue with non-compact ordinals, it seems easier to give a direct proof than to find a reference.

\begin{lemma}
\label{lem:StrongTopVersion}
Let $\lambda \geq 1$ be a countable ordinal. Then there exists a closed countable set $X \subset [0,1]$ and an equalizing closed symmetric reflexive relation $E \subset X^2$ with rank $\lambda$. Furthermore, if $\lambda > 2$ is an odd ordinal, then
$(E^{[k]})^2 \subset \overline{(E^{(\lambda-1)})^{[2k]} \cap (E^{[k]})^2}$.
\end{lemma}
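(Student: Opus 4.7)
The plan is to build $(X, E)$ by transfinite induction on $\lambda \geq 1$, using a ``basic'' ordinal-space construction for successor ranks together with a telescoping combinator for limits.

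\textbf{Basic family.} For each countable ordinal $\alpha \geq 0$, I would embed the compact countable ordinal $\omega^\alpha + 1$ (respectively $\omega^\alpha + 2$) into $[0,1]$ via its order topology and take $E = \Delta \cup \{(\beta, \beta + 1), (\beta + 1, \beta) : \beta, \beta + 1 \in X\}$, the reflexive symmetric successor relation. The relation is closed because any convergent off-diagonal sequence of distinct successor edges forces both coordinates to the same limit ordinal, hence the limit is on the diagonal. A transfinite induction on $\alpha$ shows the close-up rank is $2\alpha + 1$ for the $+1$ variant and $2\alpha + 2$ for the $+2$ variant: at every even stage the transitive closure fuses each maximal successor chain between consecutive limit ordinals into one equivalence class, and at the next odd stage the topological closure absorbs the top limit ordinal of each chain into its accumulating class. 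Thus one Cantor--Bendixson level of the ordinal is stripped away per pair of stages, and the apex $\omega^\alpha$ is finally adjoined by the last topological closure. Exploiting the arithmetic identity $2\alpha = \alpha$ for limit $\alpha \geq \omega$ (and $2(\alpha + n) = \alpha + 2n$ for such $\alpha$), varying $\alpha$ across countable ordinals lets the two variants jointly exhaust every successor ordinal $\lambda \geq 1$.

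\textbf{Telescoping for limits.} For a limit ordinal $\lambda$ (such as $\omega$, $\omega \cdot 2$, $\omega^\omega$), fix an increasing sequence $\lambda_n \nearrow \lambda$ of already-constructed successor ordinals with their $(X_{\lambda_n}, E_{\lambda_n})$. Embed disjoint rescaled copies of the $X_{\lambda_n}$ into $[0,1]$ accumulating at a single new apex point $* \in [0,1]$, and add one pair of edges $(x_n, *), (*, x_n)$ per copy with $x_n \in X_{\lambda_n}$ arbitrary. Each $X_{\lambda_n}$ is then a clopen subset of $X = \{*\} \cup \bigsqcup_n X_{\lambda_n}$, and a straightforward induction (using that every path through $*$ can be shortcircuited through $x_n$) shows the close-up process restricts independently to each copy: $E^{(\mu)} \cap X_{\lambda_n}^2 = E_{\lambda_n}^{(\mu)}$ for every $\mu$. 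At each $\mu < \lambda$ some $\lambda_n$ exceeds $\mu$, hence $E^{(\mu)} \cap X_{\lambda_n}^2 \subsetneq X_{\lambda_n}^2$, giving $E^{(\mu)} \neq E^{(\mu+1)}$; at $\mu = \lambda$ the limit union saturates every copy and one further transitive closure through the apex equalizes the whole space, so the rank is exactly $\lambda$.

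\textbf{Odd condition.} For odd $\lambda > 2$, verifying $(E^{[k]})^2 \subset \overline{(E^{(\lambda - 1)})^{[2k]} \cap (E^{[k]})^2}$ is routine because the successor relation admits no nondegenerate $k$-cliques: every tuple in $E^{[k]}$ lies in $\{a, a + 1\}^k$ for some $a$ with $a + 1 \in X$, so $E^{[k]}$ is essentially a relabeling of the edges of $E$. The even stage $E^{(\lambda - 1)}$ has a single ``main'' equivalence class containing every ordinal strictly below the apex $\omega^\alpha$, plus at most a trivial class at the top; any $E^{[k]}$-tuple involving the apex is forced to be diagonal (the apex has no successor in $X$) and is approximable by diagonal $k$-tuples of ordinals converging to the apex from below, which together with any other tuple below the apex live in the main class. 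The telescoping construction inherits the property automatically because approximations may be carried out inside a single clopen copy $X_{\lambda_n}$.

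\textbf{Main obstacle.} The delicate step is the rank computation for $\omega^\alpha + 1$ at limit ordinals $\alpha$: the close-up process itself has transfinite limit stages at which the union across earlier stages can jump across several Cantor--Bendixson levels simultaneously, and one must verify that the relation produced is configured exactly so that the subsequent alternation of (closure, transitive closure) continues peeling levels at the rate needed to reach stage $2\alpha + 1$. A secondary bookkeeping burden is parity: the $+1$ and $+2$ variants must be tracked in parallel to cover both parities of rank, and the odd condition must be propagated through the telescoping combinator, which relies on clopen independence and hence on the fact that approximations need not cross the apex.
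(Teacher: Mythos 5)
Your approach is sound and shares its core with the paper's: arrange an ordinal of the form $\omega^\alpha$ (plus a top) in $[0,1]$, take the symmetrized, reflexivized successor relation, and show by transfinite induction that stage $\mu+2n$ (resp.\ $\mu+2n+1$) of the close-up process relates exactly the pairs at ordinal distance $<\omega^{\mu+n}$ (resp.\ $\leq\omega^{\mu+n}$). Where you genuinely diverge is in how even and limit ranks are produced. The paper works on the circle $[0,1]/(0\equiv 1)$ and identifies the top point $\phi(\alpha)$ with $\phi(0)$; this makes the final equalization a transitive-closure step for $\alpha=\omega^{\zeta}$, and in particular realizes \emph{limit} ranks directly by taking $\alpha=\omega^{\lambda}$ for limit $\lambda$, with no auxiliary gluing. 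You instead stay on the interval, use the extra isolated point of $\omega^\alpha+2$ to force an even final step, and handle limit ranks by a separate telescoping combinator. The telescoping does work: each copy is clopen, and the invariant that every cross-copy pair out of $X_{\lambda_n}$ factors through $x_n$ is preserved by both closure operations, so $E^{(\mu)}\cap X_{\lambda_n}^2=E_{\lambda_n}^{(\mu)}$; you gain modularity at the cost of an extra construction and an extra closedness check for the glued relation. Your odd-condition argument matches the paper's in substance: at stage $\lambda-1$ the apex sits in its own class, its only $E$-cliques are diagonal tuples, and these are approximable from below by diagonal tuples lying in the main class.

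Two points need tightening. First, in the telescoping paragraph you say that at $\mu=\lambda$ ``one further transitive closure through the apex equalizes the whole space''; read literally this would put the rank at $\lambda+2$, not $\lambda$. What actually happens is that any fixed pair meets at most two copies and the apex, and these are already joined through $*$ at some even stage strictly below $\lambda$, so $E^{(\lambda)}=\bigcup_{\mu<\lambda}E^{(\mu)}$ already equals $X^2$ (which is closed), giving rank exactly $\lambda$ with no further step. Second, the induction you defer as the ``main obstacle'' is the bulk of the proof, not a side issue: the upper bounds (no pair at distance $>\omega^{\mu+n}$ appears at a topological closure step, which uses additive indecomposability of $\omega^{\mu+n}$, and no pair at distance $\geq\omega^{\mu+n+1}$ appears at a transitive closure step, by bounding $k$-step chains through their minimum) and the limit stages of the process all have to be carried out; the paper spends most of its proof exactly there, and nothing in your setup makes that computation shorter.
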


\begin{proof}
To each $\alpha = \omega^{\zeta}$ or $\alpha = \omega^{\zeta}+1$ for $\zeta \geq 1$ we associate a countable $X \subset [0,1]$, such that for $\zeta = \omega^{\lambda + n}$ the close-up rank of $E$ is $\lambda + 2n$, and for $\zeta = \omega^{\lambda + n} + 1$ it is $\lambda + 2n + 1$.

Let $\alpha$ be as above and let $A = \{0,1,...,\alpha\}$ with the order topology. As a Von Neumann ordinal, $A = \alpha+1$, but the intended reading of $\phi(A)$ is $\phi(A) = \{\phi(a) \;|\; a \in A\}$. Take an injective order homomorphism $\phi : A \to [0,1]$, so that $\phi(0) = 0$, $\phi(\alpha) = 1$. Identify $0 \equiv 1$ to obtain a continuous function $\phi : A \to \phi(A) \subset \sph^1 = [0,1]/{\equiv}$, and let $\phi(A) = X$. The reason for arranging the ordinal on a sphere is simply to compactify the non-compact ordinals, and the reason for wanting non-compact ordinals is to allow the last step of the close-up process to be a transitive closure step.

While we do the construction on $\sph^1 = [0,1]/{\equiv}$ rather than $[0,1]$, we note that any non-full closed subset of $\sph^1$ is homeomorphic to a closed subset of $[0,1]$.

Let $E \subset (\sph^1)^2$ be the union of the diagonal relation $\Delta$ and the successor and predecessor relations, i.e.
\begin{align*}
E = & \{(\phi(\beta), \phi(\beta)), (\phi(\beta), \phi(\beta+1)), (\phi(\beta+1), \phi(\beta)) \;|\; \beta+1 < \alpha \}
\end{align*}
Observe that if $\alpha$ is a successor, we do not join $\phi(\alpha) = \phi(0)$ with its ``predecessor'' $\phi(\alpha-1)$. It is easy to see that $X$ and $E$ are closed. Consider the close-up process for this relation, starting from $E^{(1)} = E$.

Note that at all times, $E^{(\lambda)}$ for $\lambda \geq 1$ is symmetric and reflexive, so we simply have to figure out which pairs $(\phi(\beta), \phi(\beta + \gamma))$ are contained in $E^{(\lambda)}$ for each ordinal $\lambda \geq 1$. What happens is that only the length of the jumps $\gamma$ matters. Alternately, we are allowed to first take less than $\omega$ steps with the current relation (a transitive closure step), and then to close up these jumps into a single superjump consisting of $\omega$ many previous jumps (a topological closure step).

Concretely, we prove an exact characterization of $E^{(\lambda)}$ by transfinite induction: Every ordinal can be written in a unique way as $\lambda + 2n$ (even ordinals) or $\lambda + 2n + 1$ (odd ordinals) where $n \in \N$ and $\lambda$ is a limit ordinal (interpreting $0$ as a limit ordinal). For all $\lambda, n$, we prove by induction for all ordinals $\lambda+2n+b$ greater than $0$ %with $\omega^{\lambda + n} < \alpha$
($n \in \N, b \in \{0,1\}$) that 
\[ E^{(\lambda + 2n)} = \{ (\phi(\beta), \phi(\beta + \gamma)), (\phi(\beta + \gamma), \phi(\beta)) \;|\; \gamma < \omega^{\lambda + n}, \beta + \gamma < \alpha \}, \]
\[ E^{(\lambda + 2n + 1)} = \{ (\phi(\beta), \phi(\beta + \gamma)), (\phi(\beta + \gamma), \phi(\beta)) \;|\; \gamma \leq \omega^{\lambda + n}, \beta + \gamma < \alpha \}. \]

%Of course, the relation stays symmetric and diagonal at all times, so we only need to study which pairs $(\phi(\beta), \phi(\beta+\gamma))$ are in $E^{(\lambda + 2n + b)}$, $b \in \{0,1\}$.

The claim for $E^{(1)}$ is (setting $\lambda = 0, n = 0, b = 1$) equivalent to having (up to symmetry) $E^{(1)} \ni (\beta, \beta+\gamma)$ if and only if $\gamma < 2$, $\beta + \gamma < \alpha$. This is precisely the definition of $E^{(1)} = E$.

For the induction steps, suppose $E^{(\lambda+2n)}$ is of the claimed form, and consider $E^{(\lambda+2n+1)}$. This is a topological closure step, i.e.\ $E^{(\lambda+2n+1)} = \overline{E^{(\lambda+2n)}}$. Taking the closure of the pairs given by the inductive assumption, we clearly have
\[ \{ (\phi(\beta), \phi(\beta + \gamma)) \;|\; \gamma \leq \omega^{\lambda + n}, \beta + \gamma < \alpha \} \subset E^{(\lambda+2n+1)}. \]

If $\beta > 0$, then $(\phi(\beta), \phi(\beta + \gamma)) \notin E^{(\lambda+2n+1)}$ for $\gamma \geq \omega^{\lambda + n}+1$, $\beta + \gamma < \alpha$ since otherwise $(\phi(\theta), \phi(\beta+\gamma)) \in E^{(\lambda+2n)}$ for $\theta \leq \beta$ arbitrarily close to $\beta$ and $\gamma \geq \omega^{\lambda + n}+1$ (note that $\beta + \omega^{\lambda + n}+1$ is isolated), contradicting the inductive assumption.

If $\beta = 0$, then there may be additional limits on the right end of $[0,1]/{\equiv}$. In this case suppose we have $(\phi(\beta), \phi(\beta + \gamma)) = (0, \phi(\gamma)) \in E^{(\lambda+2n+1)}$ for $\omega^{\lambda + n}+1 \leq \gamma < \alpha$. To obtain such pairs, we must have $(\phi(\delta), \phi(\theta)) \in E^{(\lambda+2n)}$ for $\theta < \alpha$ arbitrarily close to $\alpha$ and $\delta \leq \gamma$ arbitrarily close to $\gamma$. If $\alpha = \omega^{\zeta}+1$, then $\alpha$ cannot be approximated from below, so this is impossible.

If $\alpha = \omega^{\zeta}$, then observe that since $\gamma < \alpha$, we have $\omega^{\lambda + n}+1 \leq \gamma < \alpha$, so since $\alpha = \omega^{\zeta}$, we have $\omega^{\lambda + n}+1 \leq \gamma \leq \omega^{\zeta'} \cdot m$ for some $\zeta' < \zeta$, $m \in \N$ (here we simplify the successor and limit case of $\zeta$ into one). Observe that we even have $\omega^{\zeta'} \cdot \omega \leq \alpha$. Now, for any $\delta \leq \gamma$ and $\omega^{\zeta'} \cdot 2m \leq \theta < \alpha$ we have
\[ \delta + \omega^{\lambda + n} + 1 \leq \omega^{\zeta'} \cdot 2m \leq \theta \]
by left distributivity and monotonicity of ordinal multiplication, which means $(\phi(\delta), \phi(\theta)) \notin E^{(\lambda+2n)}$, and thus $(\phi(\delta), \phi(\theta)) \notin \overline{E^{(\lambda+2n)}}$. This concludes the analysis of the step from $\lambda+2n$ to $\lambda+2n+1$.

Suppose then that $E^{(\lambda+2n+1)}$ is of the claimed form %and $\omega^{\lambda + n + 1} < \alpha$,
and consider $E^{(\lambda+2n+2)}$. Since $(\phi(\beta), \phi(\beta + \gamma)) \in E^{(\lambda+2n+1)}$ for all $\beta$, $\beta + \gamma < \alpha$ with $\gamma \leq \omega^{\lambda + n}$, taking the transitive closure we have
$(\phi(\beta), \phi(\beta + \gamma)) \in E^{(\lambda+2n+2)}$
for any $m \in \N$ and $\gamma \leq \omega^{\lambda + n} \cdot m$ with $\beta + \gamma < \alpha$. This is indeed equivalent to having $(\phi(\beta), \phi(\beta + \gamma)) \in E^{(\lambda+2n+2)}$ for any $\gamma < \omega^{\lambda + n} \cdot \omega = \omega^{\lambda + n + 1}$ as required.

To see that $E^{(\lambda+2n+2)}$ does not contain any pairs $(\phi(\beta), \phi(\beta + \gamma))$ with $\gamma > \omega^{\lambda + n} \cdot \omega$, $\beta + \gamma < \alpha$, consider any tuple $(\phi(\beta_1), \phi(\beta_2), \cdots, \phi(\beta_k))$ with $(\phi(\beta_i), \phi(\beta_{i+1})) \in E^{(\lambda+2n+1)}$, $\beta_i \neq \beta_{i+1}$ for all $i$, and $\beta_1 < \beta_k$. If $\beta_j$ is minimal among of the $\beta_i$, then it easily follows from the inductive assumption that
\[ \beta_k \leq \beta_j + \omega^{\lambda + n} \cdot k \leq \beta_1 + \omega^{\lambda + n} \cdot k < \beta_1 + \omega^{\lambda + n} \cdot \omega. \]
This concludes the analysis of the step from $\lambda+2n+1$ to $\lambda+2n+2$.

Finally, we look at the case that $\lambda$ is an infinite limit ordinal. By definition, $E^{(\lambda)} = \bigcup_{\delta < \lambda} E^{(\delta)}$. We need to show that $E^{(\lambda)}$ contains $(\phi(\beta), \phi(\beta+\gamma))$ for $\beta+\gamma < \alpha$ if and only if $\gamma < \omega^{\lambda}$. Since $\gamma < \omega^{\lambda}$ is equivalent to $\exists \delta < \lambda: \gamma < \omega^{\delta}$, this follows from the inductive assumption on the relations $E^{(\delta)}$. This concludes the analysis of the limit steps.

Now observe that by the above characterization, $E^{(\lambda+2n+b)} = X^2$ if and only if $(0,\gamma) \in E^{(\lambda+2n+b)}$ for all $\gamma < \alpha$. This happens if and only if $b = 0$ and $\gamma < \alpha \implies \gamma < \omega^{\lambda + n}$, i.e.\ $\alpha \leq \omega^{\lambda + n}$; or $b = 1$ and $\gamma < \alpha \implies \gamma \leq \omega^{\lambda + n}$, i.e.\ $\alpha \leq \omega^{\lambda + n} + 1$.

Now, to prove the original claim, for $\lambda = 1$ we can realize close-up rank $1$ by setting $E = X^2$ for any choice of $X$. For larger countable ordinals, in the above construction we obtained for the choice $\alpha = \omega^{\lambda + n}$ that the close-up rank of $E$ is $\lambda + 2n$, and that for the choice $\alpha = \omega^{\lambda + n}+1$ it is $\lambda + 2n + 1$.

For the last sentence, observe that in the case of odd $\lambda + 2n + 1$, the pairs added at the last step are just those joining some $\phi(\beta)$ for $\beta < \alpha-1$ with $\phi(\alpha-1)$. We can realize any such limit through pairs $(\phi(\beta), \phi(\beta_i)) \in E^{(\lambda+2n)}$ where $\beta_i \nearrow \alpha-1$.

Now consider any $(a_1,...,a_k,b_1,...,b_k) \in (E^{[k]})^2$, where we can take the $a_i$ to be sorted in increasing order of $\phi^{-1}(a_i)$, and similarly for the $b_i$. If $(a_1,b_1) \in E^{(\lambda+2n)}$ then
\[ (a_1,...,a_k,b_1,...,b_k) \in (E^{(\lambda+2n)})^{[2k]} \cap (E^{[k]})^2 \subset \overline{(E^{(\lambda+2n)})^{[2k]} \cap (E^{[k]})^2} \]
follows from transitivity of $E^{(\lambda+2n)}$. Otherwise by the above paragraph, and up to symmetry, we can assume $(a_1, b_1)$ is the limit of $(\phi(\beta), \phi(\beta_i))$. We have $b_i = b_1$ for all $i$ since $\phi(\alpha-1)$ is not in $E$-relation with any other point. We have $a_i \in \{ a_1, \phi(\phi^{-1}(a_1) + 1) \}$ for all $i$ (recall that the original relation $E$ is just the successor relation, symmetrized and reflexivized), and by the characterization of $E^{(\lambda+2n)}$ we then have $(a_i, \phi(\beta_i)) \in E^{(\lambda+2n)}$ for any $a_i$, as required.
\end{proof}

\subsection{Spaces as Toeplitz subshifts}

See e.g.\ \cite{Ku03} for basic information on Toeplitz subshifts. We recall a basic lemma about Toeplitz subshifts. This is a special case of \cite[Theorem~5.23]{GoHe55}.\footnote{Their term for the formula below is ``isochronous'', and their term for Toeplitz is ``regularly almost periodic''.}

\begin{lemma}
Suppose $x \in A^\Z$ satisfies
\[ \forall i \in \Z, k > 0: \exists j \in \Z: \exists n > 0: \forall \ell \in [-k,k], m \in \Z: x_{j+\ell+mn} = x_{i+\ell}. \]
Then $\overline{O(x)}$ is Toeplitz.
\end{lemma}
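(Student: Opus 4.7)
The plan is to show that $\overline{O(x)}$ is minimal and contains a Toeplitz point; minimality then forces $\overline{O(y)} = \overline{O(x)}$ for any such Toeplitz $y$, making $\overline{O(x)}$ a Toeplitz subshift by definition.

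For minimality, note that every finite window $w = x_{[i-k, i+k]}$ appears in $x$ at the positions $j + [-k,k] + mn$ for all $m \in \Z$, hence syndetically with gap at most $n$. So $x$ is uniformly recurrent and $\overline{O(x)}$ is minimal.

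For the Toeplitz point, I would build $y$ as a coordinatewise limit $y = \lim_l \sigma^{s_l}(x)$ via a nested diagonal induction. Fix $k_l \to \infty$ growing fast enough, and inductively choose integers $s_l \in \Z$ and periods $N_l \in \N$ satisfying (i) $N_l \mid N_{l+1}$ and $N_l \to \infty$; (ii) the scale-$l$ periodicity $x_{s_l + \ell + mN_l} = x_{s_l + \ell}$ for all $|\ell| \leq k_l$ and $m \in \Z$; and (iii) the nested congruence $s_{l+1} \equiv s_l \pmod{N_l}$. Combined with (ii), condition (iii) forces $\sigma^{s_l}(x)$ and $\sigma^{s_{l+1}}(x)$ to coincide on $[-k_l, k_l]$, so the sequence converges to some $y \in \overline{O(x)}$. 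The Toeplitz property then follows for any coordinate $i$ by picking $l$ with $|i| \leq k_l$: telescoping (iii) gives $s_{l'} \equiv s_l \pmod{N_l}$ for all $l' \geq l$, so for $l'$ large enough $y_{i + m N_l} = x_{s_{l'} + i + m N_l} = x_{s_l + i} = y_i$ by (ii), exhibiting $N_l$ as a Toeplitz period at coordinate $i$.

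The inductive step is the technical heart and main obstacle. Given $(s_l, N_l)$, apply the hypothesis at $(i, k) = (s_l, k_{l+1})$ to obtain $(j, n)$ with $x_{j + \ell + m n} = x_{s_l + \ell}$ for $|\ell| \leq k_{l+1}$ and $m \in \Z$, then try to pick $s_{l+1} \in (j + n\Z) \cap (s_l + N_l\Z)$ and set $N_{l+1} = \lcm(N_l, n)$. The difficulty is that this intersection is nonempty only when $j \equiv s_l \pmod{\gcd(n, N_l)}$. To force this residue compatibility, take $k_{l+1}$ much larger than $N_l$ so that the block $x_{[s_l - k_{l+1}, s_l + k_{l+1}]}$ contains many full copies of the scale-$l$ periodic pattern; since this long block appears again around each $j + mn$, combining the internal scale-$l$ periodicity with the external $n$-periodicity via a Fine--Wilf-style argument lets one deduce that $j \equiv s_l \pmod{\gcd(n, N_l)}$ (possibly after refining $N_l$ to a smaller divisor of itself before proceeding). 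This is essentially the classical Gottschalk--Hedlund argument for their Theorem~5.23.
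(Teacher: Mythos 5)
Your overall strategy --- iteratively re-centering $x$ so that larger and larger central windows become periodic with periods $N_l \mid N_{l+1}$, and passing to a limit point --- is the same as the paper's, and the outer layers of your argument are sound: uniform recurrence gives minimality, conditions (i)--(iii) do force convergence of $\sigma^{s_l}(x)$ to a Toeplitz point, and minimality then upgrades this to the full claim (the paper instead tracks that the limit has the same language as $x$; your minimality shortcut is a mild simplification). The gap is exactly where you locate it, and it is not cosmetic. You need $(j+n\Z)\cap(s_l+N_l\Z)\neq\emptyset$, i.e.\ $j\equiv s_l \pmod{\gcd(n,N_l)}$, and the Fine--Wilf argument you gesture at does not deliver this. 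First, the scale-$l$ periodicity you possess lives only on the stripe $s_l+[-k_l,k_l]+N_l\Z$, not on a full interval, so Fine--Wilf does not apply as stated. Second, even where two genuine periods coexist on a long interval, Fine--Wilf concludes that the \emph{word} has period $\gcd(n,N_l)$; it says nothing about the offsets $j$ and $s_l$ being congruent, and indeed the congruence can fail --- what one actually learns in that case is that $x$ has more periodic structure than recorded, which is precisely why your parenthetical ``refine $N_l$ to a smaller divisor'' is where all the work would have to go. That refinement depends on the $(j,n)$ you receive at step $l+1$, and shrinking $N_l$ retroactively threatens the divisibility chain and the congruences already exploited at earlier levels; none of this is carried out.

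The paper's proof shows the alignment problem can be sidestepped rather than solved: do not insist that the new progression pass exactly through $s_l+N_l\Z$. Take $s_{l+1}=s_l+N_l\lfloor (j-s_l)/N_l\rfloor$, which satisfies (iii) automatically (so all previously installed periodicity survives) and lands within $N_l$ of $j$. If at step $l+1$ you request the window of radius $k_{l+1}+N_l$ around $s_l$ to occur in an arithmetic progression, then after this imprecise shift the residual offset of at most $N_l$ is absorbed, and the window of radius $k_{l+1}$ around $s_{l+1}$ is genuinely $N_{l+1}$-periodic with $N_{l+1}=\lcm(N_l,n)$; no congruence between $j$ and $s_l$ is ever needed. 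With that one change your induction closes and the rest of your argument goes through.
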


In words, the assumption is that every subword appearing in $x$ at $i$ appears in some arithmetic progression in $x$ (but not necessarily one going through position~$i$).

\begin{proof}
We need to show that the orbit-closure of $x$ contains a Toeplitz point with the same language as $x$. Let $w_1 = x|_{[-1,1}$ and let $j_1$ be such that an arithmetic progression of $w_1$'s with difference $n_1$ begins at $j_1$. Replace $x$ by $x^1 = \sigma^{j_1}(x)$, so that we still have $w_1 = x^1|_{[-1,1]}$ but now the central three coordinates are in an arithmetic progression. From now on, restrict to shifting by multiples of $n_1$.

By the original assumption, the central word $x^1|_{[-n_1-|j_1|-2, n_1+|j_1|+2]}$ is in an arithmetic progression beginning at some $j_2$ with difference $n_2$, we may take $n_1 \;|\; n_2$. Let $x^2 = \sigma^{h_2 n_1}(x^1)$ where $h_2 = j_2 \sslash n_1 \in \Z$ where $\sslash$ denotes integer division discarding remainder. Since we shifted by a multiple of $n_1$, in $x^2$ the word at $[-1,1]$ still lies in an arithmetic progression with difference $n_1$. But since we shifted by a number with distance at most $n_1$ from $j_2$, now the subword $x^2|_{[-2-|j_1|, 2+|j_1|]}$ lies is part of an arithmetic progression with difference $n_2$. Observe that this contains the word $x|_{[-2, 2]}$.

In $x^2$, we have inserted Toeplitz periods for more coordinates, without modifying the periods we already introduced for $x^1$, and also introduced larger subwords of $x$ into the periodic area. An obvious induction gives a Toeplitz point with the same language as that of $x$ in the limit.
\end{proof}

The \emph{positioned words} over $A$ are $A^{**} = \{w : [a,b] \to A \;|\; a,b \in \Z \}$. For $w \in A^{**}$, $w : [a,b] \to A$, and $X \subset A^\Z$ a subshift (deduced from context), write $[w] = \{x \in X \;|\; x|_{[a,b]} = w\}$.

\spacesassubshifts*
%For any compact metrizable zero-dimensional space $Z$, there exists a pointwise zero-entropy Toeplitz subshift $X$ and an open quotient map $\phi : X \to Z$ such that
%\begin{itemize}
%\item $\phi^{-1}(z)$ is a minimal subshift for every $z \in Z$,
%\item the space of minimal subshifts under the Fell topology is homeomorphic to $Z$ by $\phi$.
%\end{itemize}

\begin{proof}
Define the subshift $X' \subset \{0,1,\#\}^\Z$ by requiring that in every point, there is an arithmetic progression $\{3k+i \;|\; k \in \Z\}$ containining only $\#$, such that $\{3k+i+1 \;|\; k \in \Z\}$ is a constant sequence containing only $0$ or only $1$, and recursively $\{3k+i+2 \;|\; k \in \Z\}$ is a point of $X'$.

This indeed describes a unique subshift whose typical points look like
\[ \ldots \#z_0\#\#z_0z_1\#z_0\#\#z_0\#\#z_0z_1\#z_0z_2\#z_0\#\#z_0z_1\#z_0\#\#\ldots \]
where $z \in \{0,1\}^\N$, and we associate a sequence of bits to every point in $X'$ by locating the unique infinite $3$-progression of $\#$s (which can be deduced from any word of length $3$), outputting the bit to the right of one (thus any) of them, and recursively extracting the rest of the bits from the third $3$-progression. Let $\phi : X' \to \{0,1\}^\N$ extract this sequence of bits, so $\phi$ is continuous, and $\phi(x) = \phi(\sigma(x))$ for all $x \in X'$.

Since $Z$ compact, metrizable and zero-dimensional, we can take (up to homeomorphism) $Z \subset \{0,1\}^\N$ a closed set such that no point in $Z$ is eventually constant, equivalently, $01$ appears infinitely many times in every $z \in Z$. Define $X = \phi^{-1}(Z)$, which is clearly a subshift. A simple computation shows that the number of words of length $n$ in $X$ (even in $X'$) is $O(n^{\log_3 6})$, so $X$ has zero entropy.

The map $\phi$ is surjective and continuous by definition, and it is closed since $X$ is compact and $Z$ Hausdorff, so it is a quotient map from $X$ to $Z$.

Let us now analyze the dynamical structure of $Z$. Call the unique infinite $3$-progression of $\#$s in a point of $X$ the \emph{$0$-skeleton}. Recursively we refer to the $0$-skeleton of the point of $X'$ obtained after $i$ iterations of the recursive extraction process as the $i$-skeleton. The \emph{skeleton} refers collectively to the family of all the $i$-skeletons, more precisely the skeleton of $x \in X$ is $\pi(x)$, where $\pi(\#) = \#$, $\pi(0) = \pi(1) = 0$.

Dynamically, extraction of the skeletons gives an almost-$1$-to-$1$ subshift cover of the $3$-odometer $(\Z_3, (a \mapsto a+1))$ (where $\Z_3$ is the space of $3$-adic integers) more precisely $\pi(X)$ is a $2$-to-$1$, and almost $1$-to-$1$, cover of the $3$-odometer, by the obvious map $\psi : \pi(X) \to \Z_3$ that records the offsets of the $\#$-progressions in an intertwining way.
What happens is that a typical $\psi(\pi(x))$ determines the skeleton $\pi(x)$ entirely. If it does not, then there is a single ``hole'' left after filling in the $\#$s. If there is a hole left, then $|\psi^{-1}(\psi(\pi(x)))| = 2$.

It is easy to see that if $\psi^{-1}(\psi(\pi(x)))$ is a singleton, then for any $z \in Z$, there is exactly one point $x' \in X$ with $\phi(x') = z$ and $\pi(x') = \pi(x)$, since after filling in the skeleton and the bits to the right of it, there is no hole left to insert a $\#$, thus no hole to insert a bit either. If there is a hole left, then we can insert any of $0,1,\#$, but all other cells are determined by $\psi(\pi(x))$ and $\phi(x)$.

It follows that the subshift $X$ itself is a $3$-to-$1$ and almost-$1$-to-$1$ subshift cover of the system $\Z_3 \times Z$ under $(\alpha, z) \mapsto (\alpha+1, z)$ where $\Z_3$ denotes the $3$-odometer.

We now show openness of $\phi$. Suppose $x \in [u]$ and $\phi(x) = z$. Pick $x' \in [u]$ such that $\phi(x') = z$ and
\[ \forall y, y': \pi(y') = \pi(y) = \pi(x') \wedge \phi(y) = \phi(y') \implies y = y'. \]
This is possible because $[u]$ determines only part of the skeleton of $x$, and we may modify the rest to find $x'$ such that $\pi(x')$ is a singleton $\psi$-fiber. The bits left in $u$ after determining part of the skeleton do not constrain this since there are no eventually constant points in $Z$; i.e.\ if there is a hole left in $x$ after determining the skeleton and filling in the bits of $\phi(z)$, and this hole is filled with $a \in \{0,1,\#\}$, then we can pick $x'$ so that this hole is filled with $a$, at the point of the process where a suitable bit emerges.

Now pick $w \in \{0,1\}^{*}$ with $z \in [w] \subset [v]$, long enough so the bits in $u$ are determined by those in $w$ (when using the skeleton of $x'$), so that for all $z' \in [w]$ there exists a (unique) point $y \in [u]$ with $\pi(y) = \pi(x') \wedge \phi(y) = z'$. %. This gives a continuous section for $\phi|_{C} : C \to [w]$ where $C = \phi^{-1}([w])$,
This means $\phi([u]) \supset [w] \ni z$. Thus $\phi$ is open.

To see that every point generates a Toeplitz subshift, let $x \in X$ be arbitrary and suppose $x \in [u]$ for some $u \in A^{**}$, $u : [a,b] \to A$. If $\psi^{-1}(\psi(\pi(x)))$ is a singleton, i.e.\ $\pi(x)$ and $\phi(x)$ determine $x$ uniquely, then clearly there exists a period $3^n > 0$ such that $\sigma^{i3^n}(x) \in [u]$ for all $i \in \Z$, i.e.\ $x$ is already Toeplitz. In general, it suffices to show the condition from the previous lemma, so we show that the word $u$ appears in some arithmetic progression in $x$.

%i.e.\ every subword appearing in $x$ at $i$ appears in some arithmetic progression in $x$ (but not necessarily one going through position $i$), which implies that the orbit-closure of $x$ contains a Toeplitz point with the same language as $x$, so $\OC{x}$ is Toeplitz.

For this, note that since $\pi(X)$ is an almost-$1$-to-$1$ cover of $Z_3$, we can find $m$ such that the set $\psi^{-1}(\psi(\pi(\sigma^j(x))))$ has very small diameter, concretely meaning that the skeleton and bits to the right of $\#$s in the skeleton determine all of the coordinates in $\sigma^j(x)_{[a,b]}$. Since $\phi(x)$ is not eventually constant, we can pick $j$ so that together with the bits of $z$, the forced word is $\sigma^j(x)_{[a, b]} = u$. The subword $[j+a, j+b]$ of $x$ then appears in an arithmetic progression because of the equicontinuity properties of the $3$-odometer, as required.

Observe now that (since every Toeplitz subshift is minimal) every point generates a minimal subshift, and this subshift maps to a single point in $\phi$. Conversely, the language of a minimal subshift is determined by the bits in its image in a continuous way (again use that no point in $Z$ is constant). Thus the fibers of $\phi$ are precisely the minimal subshifts of $X$ and $\phi$ induces a homeomorphism between them and $Z$.
\end{proof}

\subsection{Relations as entropy pairs}

Next, we realize relations as entropy pairs. We first deal with the easy case where $E$ is not doubly shift-invariant, but we only realize the relation, not the whole process. We prove this statement as it sounds nicer than the technical result we actually need, and illustrates some of the ideas used in the following section. However, it is not used in the construction of CPE ranks.

For $y \in A^\Z$, write $w \sqsubset y \iff \exists i: y_{[i,i+|w|-1]} = w$, and for $Y \subset A^\Z$ write $w \sqsubset Y \iff \exists y \in Y: w \sqsubset y$.

\anyrelation*

\begin{proof}
Let $\hat A = A \cup \{\#\}$, fix a sequence $(x^n, y^n) \in E$ such that each pair $(x, y) \in E$ is a  limit point of the sequence, and define
\[ Y_n = {\{ \ldots \# u_i \# u_{i+1} \# u_{i+2} \# \ldots \;|\; \forall i: u_i \in \{x^n_{[-n,n]}, y^n_{[-n,n]}\} \}}  \]
(i.e.\ $Y_n$ is the shift-invariant set of all points of the stated form). Let $Y = \overline{\bigcup_{n} Y_n}$.

If $(x,y) \in E \setminus \Delta$, then obviously\footnote{This is obvious from the intuition stated in the introduction: even if we only obtain information when the orbit is very close to $x$ and $y$, we obtain an exponential amount of information from points of $Y_n$ if $d((x_n, y_n), (x, y))$ is very small. The concrete computation is also straightforward.} by picking larger and larger $n$ such that $(x^n, y^n)$ is very close to $(x,y)$, $Y_n \subset Y$ implies that $(x,y) \in \EP(Y)$.

Suppose then that $(x,y) \in X^2 \setminus E$. Since $E$ is closed, for some $n$ we have $([x_{[-k,k]}] \times [y_{[-k,k]}]) \cap E = \emptyset$. We claim that for $U = [x_{[-k,k]}], V = [y_{[-k,k]}]$, the cover $(U^c, V^c)$ has zero topological entropy. 

It is enough to give, as a function of $m$, a set of binary words $U \subset \{0,1\}^m$, whose size grows subexponentially in $m$, such that for any $w \in \lang_m(Y)$, there exists $u \in U$ such that $(U^c, V^c)^u$ intersects\footnote{Covering partial orbits is more difficult than covering words in this sense by at most a constant factor, so this is indeed sufficient.} $[w]$, where $(U^c, V^c)^u$ is defined inductively on word length by
\[ (U^c, V^c)^0 = U^c, \;\; (U^c, V^c)^1 = V^c, \,\; (U^c, V^c)^{a \cdot u} = (U^c, V^c)^a \cap \sigma^{-1}((U^c, V^c)^u) \]
for $a \in \{0,1\}, u \in \{0,1\}^m$. Say $u \in \{0,1\}^m$ \emph{covers} $w \sqsubset Y$ if $(U^c, V^c)^u \cap [w] \neq \emptyset$.

We first note that the words $w$ containing at most one $\#$ are easy to cover. For such $w \sqsubset Y$, we always have $w \sqsubset X$ or $w = u\#v$ where $u$ and $v$ are words in $X$. All such words appear in a morphic image of the zero entropy subshift $X^2 \times \OC{...000111...}$, so they cannot generate entropy w.r.t.\ any cover.

To cover words with at least two $\#$-symbols, observe that in $Y$ all such words have an arithmetic progression of $\#$s. Consider a progression $\alpha = \{i(2n+2) + c \;|\; i \in \Z\} \cap \{0,1, 2,..., m-1\}$ where $n \geq 0, c \in [0,2n+1]$ and $|\alpha| \geq 2$. Then the set of words where $\#$s appear exactly in this arithmetic progression consists precisely of words obtained by inserting any combination of $x^n_{[-n,n]}$ or $y^n_{[-n,n]}$ between $i(2n+2) + c$ and $(i+1)(2n+2) + c$ for each $i$ (and we have up to $4$ choices in total for the prefix and suffix).

Since $(x^n,y^n) \in E$, by shift-invariance also $(\sigma^\ell(x^n), \sigma^\ell(y^n)) \in E$ for all $\ell \in \Z$ and thus from $([x_{[-k,k]}] \times [y_{[-k,k]}]) \cap E = \emptyset$ we obtain that for all $\ell$,
\[ \{x^n_{[\ell-k,\ell+k]}, y^n_{[\ell-k,\ell+k]}\} \neq \{x_{[-k,k]}, y_{[-k,k]} \}, \]
and thus see that there is a single word $u_{n,c}$ of length $m$ that covers every word $w$ with progression $\alpha$. This gives a polynomial bound on the number of words $u$ needed to cover those $w$ with at least two $\#$s, all in all we have obtained a subexponential bound.
\end{proof}

\subsection{Doubly invariant relations as entropy pairs}

\eqrelasentpairs*
%Let $X \subset A^\Z$ be a subshift with zero entropy and let $E \subset X^2$ be any closed doubly shift-invariant symmetric reflexive relation. Then there exists a subshift $Y$ with $X \subset Y \subset \hat A^\Z \;(\supset A^\Z)$ such that $\EP(Y) = E$ such that $\EP(Y)^{(\lambda)} \cap X^2 = E^{(\lambda)}$ for all $\lambda$ and if $E^{(\lambda)} \cap X^2 = X^2$ then $E^{(\lambda+1)} = Y^2$, and we have $E^{(\lambda)} = Y^2$ if
%\item $\lambda$ is even, or
%\item $\lambda$ is odd and $(E^{[k]})^2 \subset \overline{(E^{(\lambda-1)})^{[2k]} \cap (E^{[k]})^2}$.

\begin{proof}
%Let $F$
%Let $\mathcal{U}$ be the family of factor-closed extendable languages $U \subset \lang(X)$ such that $\lang(x), \lang(y) \subset U \implies (x, y) \in E$. The double shift-invariance of $E$ implies that for any $(x, y) \in E$, the language of the subshift $\overline{O(x)} \cup \overline{O(y)}$ is an element of $\mathcal{U}$.
%For
%Say $u, v$ are \emph{friends} if $([u] \times [v]) \cap E \neq \emptyset$. By double shift-invariance, if $(x, y) \in E$, then all subwords of $x$ and $y$ are friends. Let $F \subset A^* \times A^*$ be the friendship relation.
Define $w_0 = 1$ and inductively $w_{i+1} = w_i 0^{i+1} w_i$. Define
\[ w_\omega = \lim_i w_i \in \{0,1\}^\N = 101001010001010010100001010010100010100101... \]
Another description is that $w_\omega$ is obtained from the \emph{ruler sequence} A007814 in OEIS \cite{OEISruler} or \emph{universal counterexample} \cite{Fe06}
\[ 01020103010201040102010301020105... \]
by applying the substitution $n \mapsto 10^{n+1}$ (or $0 \mapsto 1$, $n \mapsto 0^{n}$). We refer to the words $w_i$ and their obvious positioned variants as \emph{full words} and refer to the process of extending a positioned $w_i \in A^{**}$ to $w_{i+1} \in A^{**}$ (which can be done in exactly two ways) as the \emph{left} or $\emph{right full extension}$. %We assume familiarity with the process of constructing all words in the orbit closure of the ruler sequence, and how it manifests for subshifts based on the sequence, see e.g.\ \cite{Sa17c}.

By induction $n_i = |w_i| = 3 \cdot 2^i - i - 2$ and $\sum_j (w_i)_j = 2^i$. A short computation shows that for every $k \geq 1$, there exists a subword of $w_\omega$ of length $k$ (namely its prefix) where the symbol $1$ appears at least $k/3$ times (any $k/c$ works just as well for what follows).

We now perform some substitutions to obtain a subshift. Let $\tau(0) = \{0\}, \tau(1) = \{1, 11\}$ be a nondeterministic substitution and define $Y''' = \tau(w_\omega) \subset \{0,1\}^\N$, i.e.\ $Y'''$ is the set of all possible points obtained by replacing $0$s in $w_\omega$ by $0$s, and $1$s by either $1$ or $11$. Let $\hat A = A \sqcup \{\#\}$ and let $\tau' : \hat A \to \{0,1\}^*$ be the deterministic substitution mapping $\# \mapsto 1$, $a \mapsto 0$ for $a \in A$ and $\# \notin A$, and let $Y'' = (\tau')^{-1}(Y''')$. Let $Y'$ be the $\Z$-subshift whose language consists of the left-extendable words in the language of $Y''$. Let $Y$ be the subshift of $Y'$ with additional forbidden words
\[ \{ u_0\#u_1\#\cdots\#u_k \;|\; \not\exists x_0, x_1, \cdots x_k \in X:\forall i, j: x_i \in [u_i] \wedge (x_i, x_j) \in E\} \]
(where the $u_i$ are quantified over $A^*$). The double occurrences of $\#$ fit this pattern by picking empty words $u_i$.

Say that a set of words $U \subset A^*$ is a \emph{friendship} if for any finite subset $V \subset U$ there exist points $(b_v)_v \in X^V$ such that $\forall u, v \in V: b_u \in [u] \wedge (b_u, b_v) \in E$, so since $E$ is doubly shift-invariant, the forbidden words of $Y$ precisely require that the set of words $U \subset A^*$ that appear in $y$ is a friendship. (We remark that even for a closed doubly shift-invariant equivalence relation, for a finite set $U$, in general being a friendship is stronger than all pairs $\{u, v\}$ being friendships for $u, v \in U$.)

For a concrete picture of points in $Y'$, consider the sequence $w_\omega \in \{0,1\}^\N$. %ruler sequence $x \in \N^\N$ and map it through the morphism $\tau(n) = 10^{(n+1)}$.
Its two-sided orbit closure contains points of the form
\[ ...101001010001010010100001010010100010100101... \]
We allow any of the $1$s to be duplicated, e.g.\ 
\[ ...101100101000110110010110000101001101000101001011..., \]
and then replace the $1$s by $\#$, i.e.\ 
\[ ...\#0\#\#00\#0\#000\#\#0\#\#00\#0\#\#0000\#0\#00\#\#0\#000\#0\#00\#0\#\#... \]
finally replace maximal $0$-sequences by words over $A$ of the same length, e.g.
\[ ...\#0\#\#01\#0\#011\#\#1\#\#10\#1\#\#0100\#1\#01\#\#1\#101\#1\#11\#1\#\#... \]
if $A = \{0,1\}$. Such are the points of $Y'$. In points of $Y$, the finite words should additionally be from the language of $X$, and should form a friendship.

If $U \subset A^*$ is a friendship, then by compactness to every word $u \in U$, we can associate a point $b_u \in [u]$, so that $(b_u, b_v) \in E$ for all $u, v \in U$. Fix such points for each set $U$ and call $b_u$ the \emph{(friendship) bracelet of $u$ (w.r.t. $U$)}. Of particular importance in what follows are (sets of) bracelets for sets of words $U$, not containing $\#$, that appear in some point $y \in Y$, or the union of such sets for two points $y, z$. We note that in such a situation, the bracelets are not enforced to be in the orbit closure of $y$ or $z$.

Call the binary word $\tau'(w)$ recording positions of $\#$ by $1$ in a point $w$ the \emph{skeleton}, similarly for points $x \in Y$, and the word where $11$ is further replaced by $1$ the \emph{preskeleton}. We call a (positioned or not) word $w \in \hat A^* \cup \hat A^{**}$ \emph{full} if it is equal to (a shift of) $(\tau')^{-1}(w_n)$ for some $n$, or if (more generally) its preskeleton is full.

We now explain the technical trick for characterizing the entropy pairs $\EP(Y)$, and show that $(y,z) \notin \Delta_Y$ is an entropy pair if and only if the set
\[ U = \{ u \in A^* \;|\; u \sqsubset y \vee u \sqsubset z \}, \]
containing all non-$\#$ subwords from $y$ and $z$, is a friendship.

Suppose $u', v' \in \hat A^{**}$ are any two positioned words that appear in some points of $Y$, with $\#$ as their first and last symbols (note that words beginning and ending with $\#$ are dense in $Y$ in the natural topology of $\hat A^{**} \cup \hat A^\Z$), and let $U$ be the set of words containing $u \in A^*$ iff $\#u\# \sqsubset u'$ or $\#u\# \sqsubset v'$. Suppose $U$ is a friendship. Extend $u'$ and $v'$ to full positioned words whose preskeletons are the same length, in an arbitrary way, though so that their finite subwords over $A^*$ remain a friendship (safe continuations for words can always be found in the bracelets proving the friendship). The left and right ends of the resulting words $u''$ and $v''$ have some left and right offsets $n_1, n_2$ respectively, in the sense that $u'' \in A^{[a,b]}$ and $v'' \in A^{[a+n_1,b+n_2]}$.

Now, continue extending the words by alternately turning them into full positioned words by adding a left and a right full extension, so that no new occurrence of $\#\#$ appears, and all finite subwords over $A^*$ remain a friendship. Note that the left and right ends continue to differ by exactly $n_1$ and $n_2$, since the length of the left or right continuation is always the same if no occurrence of $\#\#$ appears. Extend the words so many times that you have at least $\max(|n_1|, |n_2|)$ undoubled $\#$s on both sides. Now finally double some of the $\#$ to get two positioned full words $u, v$ which have $u'$ and $v'$ at the center, have the same preskeleton, and have the same left and right length, that is, $u, v \in A^{[c,d]}$ for some $c, d \in \Z$.

Now suppose $u$ and $v$ have preskeleton $w_i$. By the analysis of the density of $1$s in the ruler sequence, a word of length $k$ can contain at least $k/(6 \cdot 2^i) - 2$ occurrences of $u$ and $v$, which are \emph{interchangeable}, meaning we can change any subword $u$ into $v$ in any word $w \sqsubset Y$ containing it, to obtain another valid $w' \sqsubset Y$ (or vice versa). To see the density claim, recall there can be $k/3$ symbols $1$ in a word of length $k$ from the ruler sequence, and thus at least $k/(3 \cdot 2^i) - 2$ occurrences of $w_i$. We additionally precompose with $k \mapsto k/2$ since some of the $1$s are doubled in the skeleton, compared to the preskeleton.

The interchangeability, together with the lower bound on maximal density, shows that the cover $([u]^c, [v]^c)$ generates entropy in $Y$, and thus we have shown that any points whose non-$\#$ subwords form a friendship are entropy pairs. On the other hand if the union $U \subset A^*$ of the sets of subwords of two words $u, v$ is not a friendship, then these words do not even appear together in any point of $Y$, by definition, so neither can $u$ and $v$, and it follows that the cover $([u]^c, [v]^c)$ has zero entropy. This concludes the characterization of $\EP(Y)$.

An important corollary of this characterization is that $\EP(Y)$ is doubly shift-invariant, and therefore so is any $\EP(Y)^{(\lambda)}$ by an obvious transfinite induction. If $(x, y) \in F$ and $F$ is a doubly shift-invariant closed relation, then $(x', y) \in F$ for any $x'$ in $\OC{x}$, in particular this holds for the relations $F = \EP(Y)^{(\lambda)}$.

It is clear from the characterization that $\EP(Y) \cap X^2 = E$ as required. We need show $\EP(Y)^{(\lambda)} \cap X^2 = E^{(\lambda)}$ for all $\lambda$, and we do this by transfinite induction. On limit ordinal steps, this obviously continues to hold. On even successor steps, i.e.\ transitive closure steps, use the fact that points of $Y$ can be replaced by any point in their orbit closure, and the orbit-closure of every point contains a point of $X$. This allows turning any transitivity sequence $(x = y_0, y_1, ..., y_k = x')$ where $x, x' \in X$, $y_i \in Y$, $(y_i, y_{i+1}) \in E^{(\lambda-1)}$, into one where $y_i \in X$ for all $i$. This shows $(\EP(Y)^{(\lambda-1)})^* \cap X^2 = (\EP(Y)^{(\lambda-1)} \cap X^2)^*$ as required.

Now, note that if $\EP(Y)^{(\lambda)}$ is transitively closed, $y \in Y \setminus X$ and $b$ is any bracelet of $y$, or more generally any point in $X$ whose set of $A^*$-subwords together with $A^*$-subwords of $Y$ forms a friendship, then we have $(y, z) \in \EP(Y)^{(\lambda)}$ if and only if $(b, z) \in \EP(Y)^{(\lambda)}$. This is because $(b, y) \in \EP(Y)$ by the characterization of entropy pairs of $Y$. In other words, on every transitive step, every $y \in Y$ satisfies the same relations as any of its bracelets or any point that could be chosen as a bracelet for it.

On a topological closure step, i.e.\ for an odd ordinal $\lambda$, suppose $\lim_i (x_i, y_i) = (x, y) \in X^2$ where $(x_i, y_i) \in \EP(Y)^{(\lambda-1)}$ for all $i$. It is enough to show that the $x_i$ can be replaced by points from $X$, as then (also applying this idea to the right component) we have $\overline{\EP(Y)^{(\lambda-1)}} \cap X^2 = \overline{\EP(Y)^{\lambda-1} \cap X^2}$. If there is a subsequence of the $x_i$ in $X$, then we can directly restrict to that. Otherwise, restrict to a subsequence such that $x_i \notin X$ for all $i$.

In $x_i$, take a maximal central positioned word $u_i : A^{[-a, a]}$ not containing $\#$ (where necessarily $a \rightarrow \infty$ as $i \rightarrow \infty$) and replace $x_i$ by the bracelet $b_{u_i}$ with respect to the language of $x_i$, centered to have $u_i$ in the same place as in $x_i$. We have $(b_{u_i}, y_i) \in \EP(Y)^{(\lambda-1)}$ since $\EP(Y)^{(\lambda-1)}$ is transitively closed and $(b_{u_i}, x_i) \in \EP(Y)$, and clearly $(b_{u_i}, y_i) \rightarrow (x,y)$, as required. 

Now suppose $E^{(\lambda)} = X^2$ for some $\lambda$. If $\lambda$ is even, then $\EP(Y)^{(\lambda)}$ is transitively closed so since any $y \in Y \setminus X$ appears in the same relations as any bracelet of its, $\EP(Y)^{(\lambda)} = Y^2$. Suppose then that $\lambda$ is odd. Let $(y, z) \in Y^2$, and suppose $y, z \in Y \setminus X$, the other three cases being similar.\footnote{If $y \in X$, you can just pick all bracelets from the orbit of $y$, similarly for $z$, and you can approximate $y$ and $z$ in the exact same fashion. However, then $u_1\#u_2\#\cdots\#u_k$ does not look correct.} Let $(b_u)_u$ and $(c_v)_v$ be the respective bracelets of their subwords w.r.t.\ their languages. If any two of these bracelets, one from $y$ and one from $z$, are in relation on step $\lambda-1$, then already $(y, z) \in \EP(Y)^{(\lambda-1)}$ since $\EP(Y)^{(\lambda-1)}$ is transitive.

Otherwise, let $k$ be arbitrary and let (up to swapping $y$ and $z$) $m \geq k$ and $n \geq m$ be such that $y_{[-m,m]} = u_1\#u_2\#\cdots\#u_k$, $U = (u_1, ..., u_k) \in (A^*)^k$ and $z_{[-m,n]} = v_1\#v_2\#\cdots\#v_k$, $V = (v_1, ..., v_k) \in (A^*)^k$. Let $B = (b_u)_{u \in U}$ and $C = (c_v)_{v \in V}$ be bracelets for these words with respect to the subwords of $y$ and $z$, respectively.

Now, recall that the bracelets of a point are always (by definition) in the $E$-relation together, and apply the odd condition to $(B, C) \in (E^{(k)})^2$. Since $(E^{[k]})^2 \subset \overline{(E^{(\lambda-1)})^{[2k]} \cap (E^{[k]})^2}$, we can find in $(E^{(\lambda-1)})^{[2k]} \cap (E^{[k]})^2$ arbitrarily good approximations $B' = (b_u')_{u \in U}$ and $C' = (c_v')_{v \in V}$ to $B$ and $C$ respectively. In particular, we can find good enough approximations to ensure that $b_u'$ contains $u$ and $c'_v$ contains $v$, for each $u \in U, v \in V$. Then since $\EP(Y)^{(\lambda-1)}$ is transitive and $(b_u', c'_v) \in E^{(\lambda-1)}$, any pair of points $(y_k, z_k)$ in $Y$ for which we can take $b_u'$ and $c'_v$ respectively as one of their bracelets, satisfies $(y_k, z_k) \in \EP(Y)^{(\lambda-1)}$.

We construct the pair $y_k, z_k$ so that $(y_k, z_k)_{[-k,k]} = (y,z)_{[-k,k]}$. For this, directly let $(y_k)_{[-m,m]} = u_1\#u_2\#\cdots\#u_k$ and $(z_k)_{[-m,n]} = v_1\#v_2\#\cdots\#v_k$. Extend $y_k$ to a point of $Y$ by copying the skeleton from $y$ and filling all the remaining gaps arbitrarily so that all words between two $\#$s (and possible infinite tails without an occurrence of $\#$) are subwords of the words in $B'$. Extend $z_k$ similarly. These are indeed valid points: the skeletons are correct since we copied them from valid points, and the set of $A^*$-subwords of each point is a friendship, since $B', C' \in E^{[k]}$. Clearly we can take any $b_u'$ and $c'_v$ respectively as one of their bracelets, so $(y_k, z_k) \in \EP(Y)^{(\lambda-1)}$ gives an approximation to $(y,z)$ that is correct in the interval $[-k,k]$.
\end{proof}

\subsection{Implementation of arbitrary CPE ranks}

For the main proof, we make some simple topological observations.

\begin{lemma}
\label{lem:Open}
Suppose $X, Y$ are first-countable topological spaces, and $f : X \to Y$ is a quotient map. Then the following are equivalent:
\begin{itemize}
\item $f$ is open,
\item whenever $\lim_i y_i = y \in Y$ and $f(x) = y$, then there exist $x_i \in X$ such that $f(x_i) = y_i$ and $\lim_i x_i = x$.
\end{itemize}
\end{lemma}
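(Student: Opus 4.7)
The plan is to prove both implications separately, using first-countability of $X$ for one direction and first-countability of $Y$ for the other.

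For the direction $f$ open $\Rightarrow$ sequence lifting, suppose $y_i \to y$ in $Y$ and $f(x) = y$. First-countability of $X$ at $x$ gives a decreasing neighborhood basis $U_1 \supset U_2 \supset \cdots$ at $x$. Since $f$ is open, each $f(U_n)$ is an open neighborhood of $y$ in $Y$, so we can pick a strictly increasing sequence of indices $N_1 < N_2 < \cdots$ with $y_i \in f(U_n)$ for all $i \geq N_n$. For $i < N_1$ define $x_i$ to be an arbitrary preimage of $y_i$ under $f$ (which exists by surjectivity); for $N_n \leq i < N_{n+1}$ pick $x_i \in U_n$ with $f(x_i) = y_i$. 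Then for every neighborhood $V$ of $x$, some $U_n \subset V$, hence $x_i \in V$ for all $i \geq N_n$, giving $x_i \to x$ as required.

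For the converse, sequence lifting $\Rightarrow$ $f$ open, I argue by contrapositive: if $f$ is not open, pick an open $U \subset X$ such that $f(U)$ is not open, and choose $y \in f(U)$ with no neighborhood contained in $f(U)$. First-countability of $Y$ at $y$ (taking a decreasing basis and picking points outside $f(U)$) yields a sequence $y_i \to y$ with $y_i \notin f(U)$ for all $i$. Choose any $x \in U$ with $f(x) = y$. By the lifting property there exist $x_i$ with $f(x_i) = y_i$ and $x_i \to x$; since $U$ is an open neighborhood of $x$, we have $x_i \in U$ for all sufficiently large $i$, so $y_i = f(x_i) \in f(U)$ for those $i$, contradicting the choice of the sequence. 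This completes the equivalence.

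There is no real obstacle here; the only point requiring mild care is the indexing in the first direction, where one must simultaneously ensure that every $y_i$ gets some preimage and that the preimages accumulate at $x$. The quotient hypothesis is only used implicitly, via surjectivity, to guarantee that preimages exist.
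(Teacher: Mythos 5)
Your proof is correct and follows essentially the same route as the paper's: a decreasing neighborhood basis at $x$ pushed forward through the open map for one direction, and a contrapositive argument producing a sequence $y_i \to y$ avoiding $f(U)$ for the other. The extra care you take with the indexing $N_1 < N_2 < \cdots$ just makes explicit what the paper's proof states more tersely.
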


\begin{proof}
Suppose $f$ is open and $f(x) = y$. Then every neighborhood $U \ni x$ maps to a neighborhood $f(U)$ of $y$, in particular if $\lim_i y_i = y$, then for all large enough $i$, $y_i$ has a preimage $x_i$ in $U$. Picking open sets $U$ from a decreasing countable neighborhood basis of $x$, and always picking preimages for $y_i$ from $U$ until they start having preimages in the next neighborhood of $x$, the claim is proved.

For the other direction, suppose $f$ is not open, and let $U$ be open such that for some $y \in f(U)$, every neighborhood of $y$ contains a point without a preimage in $U$. Pick $x \in U$ such that $f(x) = y$. By first-countability there is a sequence $y_i \rightarrow y$ such that the points $y_i$ have no preimage in $U$. The second condition fails, since we cannot pick $x_i \in U$.
\end{proof}

The following lemmas explain why we want $\phi$ to be open.

\begin{lemma}
\label{lem:OpenMapProcess}
Let $X,Y$ be first-countable topological spaces. Let $f : X \to Y$ be an open quotient map, $E \subset Y^2$ a symmetric reflexive closed relation, and $F = (f^{-1} \times f^{-1})(E)$. Then $F^{(\lambda)} = (f^{-1} \times f^{-1})(E^{(\lambda)})$ for all ordinals $\lambda$.
\end{lemma}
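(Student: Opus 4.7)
The plan is transfinite induction on $\lambda$, with the openness hypothesis entering only at the topological-closure (odd) steps. I would first set up the base case: since $E$ is closed and $f \times f$ is continuous, $F = (f \times f)^{-1}(E)$ is closed as well, so both $E$ and $F$ enter the hierarchy at index $1$, and $F^{(1)} = F = (f \times f)^{-1}(E^{(1)})$ by definition. For the induction, I would split into the three usual cases.

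For a limit step $\lambda = \lim_i \lambda_i$, the statement is formal: preimage commutes with unions, so
\[
F^{(\lambda)} = \bigcup_i F^{(\lambda_i)} = \bigcup_i (f \times f)^{-1}(E^{(\lambda_i)}) = (f \times f)^{-1}\Bigl(\bigcup_i E^{(\lambda_i)}\Bigr) = (f \times f)^{-1}(E^{(\lambda)}).
\]
For an even successor step (transitive closure), I would use surjectivity of $f$: the inclusion $F^{(\lambda)} \subseteq (f \times f)^{-1}(E^{(\lambda)})$ is immediate from the inductive hypothesis, and for the reverse direction, given $(x,x')$ with $(f(x),f(x')) \in (E^{(\lambda-1)})^*$, I take a witnessing zigzag $f(x) = y_0, y_1, \ldots, y_n = f(x')$ in $E^{(\lambda-1)}$ and pick any preimages $x_i$ for $y_i$ (with $x_0 = x$, $x_n = x'$); the inductive hypothesis yields $(x_i, x_{i+1}) \in F^{(\lambda-1)}$, so $(x,x') \in F^{(\lambda)}$.

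The main obstacle is the odd-successor (topological closure) step, which is precisely why openness of $f$ is needed. The inclusion $\overline{F^{(\lambda-1)}} \subseteq (f \times f)^{-1}(\overline{E^{(\lambda-1)}})$ follows from continuity of $f \times f$ (the right side is closed and contains $F^{(\lambda-1)}$ by induction). For the reverse inclusion, suppose $(f(x),f(x')) \in \overline{E^{(\lambda-1)}}$; by first-countability of $Y \times Y$ there is a sequence $(y_i, y_i') \to (f(x), f(x'))$ with $(y_i, y_i') \in E^{(\lambda-1)}$. Here I apply Lemma~\ref{lem:Open} to $f$ twice: since $f$ is an open quotient and $X, Y$ are first-countable, I lift $y_i \to f(x)$ to $x_i \to x$ with $f(x_i) = y_i$ and lift $y_i' \to f(x')$ to $x_i' \to x'$ with $f(x_i') = y_i'$. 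By the inductive hypothesis $(x_i, x_i') \in F^{(\lambda-1)}$, and $(x_i, x_i') \to (x, x')$ gives $(x,x') \in \overline{F^{(\lambda-1)}} = F^{(\lambda)}$.

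This completes the induction. The limit and transitive-closure steps are formal and use only surjectivity and continuity; openness is essential only at the closure steps, where it is exactly the tool that lets convergent sequences in $Y$ be lifted to convergent sequences in $X$, ensuring that the preimage functor preserves closures of the inductively constructed relations.
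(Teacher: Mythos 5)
Your proof is correct and follows essentially the same route as the paper: transfinite induction, with preimage--union commutation at limit steps, surjectivity for lifting zigzags at transitive-closure steps, and Lemma~\ref{lem:Open} (sequence lifting via openness and first-countability) for the reverse inclusion at topological-closure steps. The only cosmetic difference is that you apply the lifting lemma to $f$ componentwise twice, while the paper applies it once to $f \times f$, which it notes is itself an open quotient map.
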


\begin{proof}
Since $X$ is first-countable, so is $X^2$, and thus the closure of a set $C \subset X^2$ is just the set of limits of sequences in $C$.\footnote{In other words $X^2$ has the \emph{Fr\'echet-Urysohn} property. For general topological spaces, even if $X$ is Frech\'et-Urysohn, $X^2$ need not be \cite{Gr06}.}

Write $g = f \times f : X^2 \to Y^2$, and observe this is also an open quotient map. We have $F^{(\zeta)} = g^{-1}(E^{(\zeta)})$ where $\zeta = 0$ if $E$ is not closed, and $\zeta = 1$ otherwise, since $E$ is closed if and only if $F$ is closed since $g$ is quotient map. We proceed by induction. Suppose $\lambda$ is an odd ordinal, and $g(x,x') = (y,y')$. If $(x,x') \in F^{(\lambda)}$ then there exist $(x_i,x_i') \in F^{(\lambda-1)}$ with $(x_i,x_i') \rightarrow (x,x')$ and then $g(x_i,x_i') \rightarrow (y,y')$ by continuity showing $E^{(\lambda)} \supset g(F^{(\lambda)})$, which implies $g^{-1}(E^{(\lambda)}) \supset F^{(\lambda)}$. 

For the other inclusion, suppose $(y, y') \in E^{(\lambda)}$, so $(y, y') = \lim_i (y_i, y_i')$ for some $(y_i, y_i') \in E^{(\lambda-1)}$. By the previous lemma, there exist $(x_i,x_i') \in X^2$ such that $g(x_i,x_i') = (y,y')$ and $(x_i, x_i') \rightarrow (x, x')$. Then by induction we have $(x_i,x_i') \in F^{(\lambda-1)}$ for all $i$, so $(x,x') \in F^{(\lambda)}$, showing $g^{-1}(E^{(\lambda)}) \subset F^{(\lambda)}$.

If $\lambda$ is an even successor ordinal, suppose $g(x,x') = (y,y')$. If $(x,x') \in F^{(\lambda)}$, there exist $x = x_0, x_1, ..., x_k = x'$ such that $(x_i, x_{i+1}) \in F^{(\lambda-1)}$ for all $i$, and then by induction $g(x_i, x_{i+1}) \in E^{(\lambda-1)}$ and since $y = f(x_0), y' = f(x_k)$ we have $(y,y') \in E^{(\lambda)}$.

If on the other hand $(y, y') \in E^{(\lambda)}$, then there exist $y = y_0, y_1,..., y_k = y'$ with $(y_i, y_{i+1}) \in E^{(\lambda-1)}$. Pick any preimages $f(x_i) = y_i$ with $x_0 = x, x_k = x'$. by induction $(x_i, x_{i+1}) \in F^{(\lambda-1)}$ for all $i$, and thus $(x,x') \in F^{(\lambda)}$.

For the limit ordinal case, observe that union commutes with preimage.
\end{proof}

\begin{lemma}
Let $X, Y$ be first-countable. Let $f : X \to Y$ be an open quotient map, $E \subset Y^2$ a symmetric reflexive relation, and $F = (f^{-1} \times f^{-1})(E)$. Then
\[ (E^{[k]})^2 \subset \overline{(E^{(\lambda-1)})^{[2k]} \cap (E^{[k]})^2} \]
if and only if
\[ (F^{[k]})^2 \subset \overline{(F^{(\lambda-1)})^{[2k]} \cap (F^{[k]})^2} \]
\end{lemma}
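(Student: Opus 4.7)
The plan is to reduce the statement to the previous lemma by recognizing that $E^{[k]}$, $(E^{(\lambda-1)})^{[2k]}$, and $(E^{[k]})^2$ are all preimages under iterated products of $f$ of analogous sets in $Y^k$, $Y^{2k}$, $Y^{2k}$ respectively, together with the two key facts that products of open quotients are open quotients and (by Lemma~\ref{lem:Open}) open quotients allow us to lift convergent sequences.

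First I would observe that for the map $f^k : X^k \to Y^k$ (which is an open surjection, hence an open quotient map, as products of open maps are open and surjections of this form are quotient), one has $F^{[k]} = (f^k)^{-1}(E^{[k]})$ directly from the definition of $F = (f \times f)^{-1}(E)$. Similarly, applying Lemma~\ref{lem:OpenMapProcess} to $f \times f$ gives $F^{(\lambda-1)} = (f \times f)^{-1}(E^{(\lambda-1)})$, and so for $h := f^{2k} : X^{2k} \to Y^{2k}$, which is again an open quotient, one has
\[ h^{-1}\!\left( (E^{(\lambda-1)})^{[2k]} \cap (E^{[k]})^2 \right) = (F^{(\lambda-1)})^{[2k]} \cap (F^{[k]})^2 \qquad \text{and} \qquad h^{-1}\!\left( (E^{[k]})^2 \right) = (F^{[k]})^2. \]
Set $P = (E^{(\lambda-1)})^{[2k]} \cap (E^{[k]})^2$ and $Q = h^{-1}(P)$; the statement becomes $(E^{[k]})^2 \subset \overline{P} \iff h^{-1}((E^{[k]})^2) \subset \overline{Q}$.

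For the forward direction, let $\bar x \in (F^{[k]})^2$. Then $h(\bar x) \in (E^{[k]})^2 \subset \overline{P}$, so there is a sequence $\bar y_i \in P$ with $\bar y_i \to h(\bar x)$. Since $h$ is open and $X^{2k}$ is first-countable (being a finite product of first-countable spaces), Lemma~\ref{lem:Open} yields $\bar x_i \in h^{-1}(\bar y_i) \subset Q$ with $\bar x_i \to \bar x$, so $\bar x \in \overline{Q}$. For the backward direction, let $\bar y \in (E^{[k]})^2$; pick any $\bar x \in h^{-1}(\bar y)$, which lies in $(F^{[k]})^2$. By assumption $\bar x \in \overline{Q}$, so there is $\bar x_i \in Q$ with $\bar x_i \to \bar x$. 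Then $h(\bar x_i) \in h(Q) \subset P$ and by continuity $h(\bar x_i) \to h(\bar x) = \bar y$, giving $\bar y \in \overline{P}$.

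The only potential obstacle is the auxiliary claim that $h = f^{2k}$ is an open quotient map between first-countable spaces, which I would justify briefly: products of open maps are open (products of open sets form a basis), a surjective open map is automatically a quotient map, and finite products of first-countable spaces are first-countable. Everything else is bookkeeping with the identities $(f^m)^{-1}(A \cap B) = (f^m)^{-1}(A) \cap (f^m)^{-1}(B)$ and the characterization of openness from Lemma~\ref{lem:Open}, and no new ideas beyond those of Lemma~\ref{lem:OpenMapProcess} are needed.
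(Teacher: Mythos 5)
Your proposal is correct and follows essentially the same route as the paper's proof: identify $(F^{[k]})^2$ and $(F^{(\lambda-1)})^{[2k]} \cap (F^{[k]})^2$ as preimages under the product of $f$ (using Lemma~\ref{lem:OpenMapProcess} for the $F^{(\lambda-1)}$ part), then lift approximating sequences via Lemma~\ref{lem:Open} for the forward direction and push forward by continuity for the converse. The only cosmetic difference is that you package the argument through the single open quotient map $h=f^{2k}$ rather than lifting tuples elementwise, which is a harmless reformulation.
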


\begin{proof}
Suppose $(E^{[k]})^2 \subset \overline{(E^{(\lambda-1)})^{[2k]} \cap (E^{[k]})^2}$ and consider $(A', B') \in (F^{[k]})^2$. Consider $f(A')$ and $f(B')$ (where $f$ is applied diagonally to all elements of the tuple), which are by definition $k$-tuples $A, B \in E^{[k]}$. In $(E^{(\lambda-1)})^{[2k]} \cap (E^{[k]})^2$ we find arbitrarily good approximations $(A_i, B_i)$ to $(A,B)$. Take any $f$-preimages, $f(A_i') = A_i$, $f(B_i') = B_i$ which are close to $A', B'$ elementwise, using Lemma~\ref{lem:Open}, obtaining by definition $A'_i, B_i' \in F^{[k]}$. By the previous lemma, we have $(F^{(\lambda-1)})^{[2k]} = (f \times f)^{-1}(E^{(\lambda-1)})^{[2k]}$ so we have $(A_i', B_i') \in (F^{(\lambda-1)})^{[2k]}$. This implies $(F^{[k]})^2 \subset \overline{(F^{(\lambda-1)})^{[2k]} \cap (F^{[k]})^2}$.

The other direction is similar, but using continuity instead of openness.
\end{proof}

\main*
%Let $\alpha$ be a countable ordinal. Then there exists a subshift $X \subset A^\Z$ in CPE class $\alpha$.

\begin{proof}
If $\alpha = 1$, $X = A^\Z$ is an example.

Let $\lambda \geq 2$ be a countable ordinal. By Lemma~\ref{lem:StrongTopVersion}, there exists a closed countable set $Z \subset [0,1]$ and a closed symmetric reflexive relation $E \subset Z^2$ such that $E$ is equalizing with close-up rank $\lambda$. Furthermore, if $\lambda$ is an odd ordinal, then $(E^{[k]})^2 \subset \overline{(E^{(\lambda-1)})^{[2k]} \cap (E^{[k]})^2}$.

By Lemma~\ref{lem:SpacesAsSubshifts}, since every countable compact subspace of $[0,1]$ is zero-dimensional, there exists a pointwise zero-entropy Toeplitz subshift $X$ and a doubly shift-invariant closed equivalence relation $K \subset X^2$ such that $Z$ is the open quotient of $X$, by $\phi : X \to Z$, and $\phi(x) = \phi(y) \iff (x,y) \in K$.

Since $\phi$ is an open quotient map, Lemma~\ref{lem:OpenMapProcess} shows that $E^{(\lambda)} = Z^2$ if and only if $(\phi^{-1} \times \phi^{-1})(E) = F$ satisfies $F^{(\lambda)} = X^2$. If $\lambda$ is an odd ordinal, then $(E^{[k]})^2 \subset \overline{(E^{(\lambda-1)})^{[2k]} \cap (E^{[k]})^2}$, and by the previous lemma we have $(F^{[k]})^2 \subset \overline{(F^{(\lambda-1)})^{[2k]} \cap (F^{[k]})^2}$.

Now, apply Lemma~\ref{lem:EqRelsAsEntPairs} to obtain $Y \supset X$ such that $\lambda$ is the minimal ordinal satisfying $\EP(Y)^{(\lambda)} = Y^2$. This $Y$ is a subshift in CPE class $\lambda$.
\end{proof}

\subsection{Not every relation satisfies odd condition}

We show that not every relation satisfies the odd condition, at least for one-step processes.

\begin{example}
\label{ex:NotEvery}
Let $X = [0,3]$. For $i,j \in \{0,1,2\}$, let $X_{i,j} \subset [j, j+1]$ be a dense set such that $X_{i,j} \cap X_{i',j'} = \emptyset$ for $(i,j) \neq (i',j')$.

Indexing modulo $3$, let $R_i = (X_{i,i} \cup X_{i,i+1})^2$. Now clearly $E = R_0 \cup R_1 \cup R_2$ is reflexive, symmetric and transitive but is not closed. The close-up rank is $1$: $E^{(0)} = E$ and $E^{(1)} = [0,3]^2$.

The odd condition $(E^{[2]})^2 \subset \overline{E^{[4]} \cap (E^{[2]})^2}$ fails because $(E^{[2]})^2$ contains quadruples $(x, y, z, w) \in [0,1] \times [1,2] \times [1,2] \times [2,3]$, while $E^{[4]}$ only contains quadruples contained in $[0,2]^4 \cup [1,3]^4 \cup ([0,1] \cup [2,3])^2$. \qee
\end{example}

\bibliographystyle{plain}
\bibliography{../../../bib/bib}{}

\end{document}